\newcommand{\g}{\mathfrak{g}}
\newcommand{\J}{\mathcal{J}}
\begin{document}

\title{Classification of integrable complex structures on 6-dimensional product Lie algebras}

\titlerunning{Integrable complex structures on 6-dimensional product Lie algebras}

\author{Andrzej Czarnecki}

\institute{Andrzej Czarnecki \at Jagiellonian University, \L{}ojasiewicza 6, 30-348 Krakow, Poland\\\email{andrzejczarnecki01@gmail.com}}

%\date{Received: date / Accepted: date}
% The correct dates will be entered by the editor

\maketitle

\begin{abstract}
We classify all integrable complex structures on 6-dimensional Lie algebras of the form $\g\times\g$.
\keywords{Lie algebras \and Left-invariant complex structures \and integrable complex structures}
\subclass{17B40 \and 53C15 \and 53C30}
\end{abstract}

\section{Introduction.}

In this paper we finish the classification of integrable complex structures on 6-dimensional Lie algebras of the form $\g\times\g$, i.e. automorphisms $\J: \g\times\g \longrightarrow \g\times\g$ such that $\J^2= -id$ and the Nijenhuis tensor
\begin{align*}
N_{\J}(v,w):=[v,w]+\J[\J v,w]+\J[v,\J w]-[\J v,\J w]=0
\end{align*}
vanishes. Such a structure on the Lie algebra is of course equivalent to a left invariant complex structure on any Lie group $H$ with $T_eH=\g\times\g$.

Classification of integrable complex structures on all real Lie algebras is a well established problem, cf. a summary of results on their existence in \cite{classif}. In dimension 6, the question is settled only for special -- abelian -- complex structures (cf. \cite{andr}), and for nilpotent algebras (cf. \cite{nilmanifolds,sal}). In \cite{jamarcin} we established which 6-dimensional Lie algebras that split as a product $\g \times \g$ admit an integrable complex structure, a problem studied before in the special cases of $\mathfrak{su}(2)\times\mathfrak{su}(2)$ and $\mathfrak{sl}(2,\mathbb{R})\times\mathfrak{sl}(2,\mathbb{R})$ in \cite{mag1,mag2}. In this two thorough papers Magnin describes, among other things, all possible integrable almost complex structures on these algebras, and we provide here a similar description for all other cases.

The main application of \cite{jamarcin} was to generalize the notion of normal structures. Consider for example an almost contact structure: a smooth Riemannian manifold $M$ with a vector field $\xi$ and an almost complex structure $\phi$ on its orthogonal complement, $\xi^\top$. We say that this structure is \emph{normal} if the complex structure $\phi\oplus\left[\begin{smallmatrix} 0 & 1 \\ -1 & 0\end{smallmatrix}\right]$ on $T\left(M\times \mathbb{R}\right)=\xi^\top\oplus\xi\oplus T\mathbb{R}$ extending $\phi$ is integrable (cf. \cite{jamarcin} and references therein for more examples of similar definitions and viewpoints in various subjects of geometry). It is natural to want to replace the $\mathbb{R}$-action by an action of arbitrary group $G$, however one runs into the problem of ambiguity of definition: there can be essentially only one complex structure on $\xi\oplus T\mathbb{R}$, but there may be many distinct structures on $\g\times \g$, possibly sufficiently different between different groups to prevent a definition of the structure on the product $M\times G$ independent from the particularities of the group in question. Surprisingly, it turns out that each 3-dimensional group admits a complex structure of a very special type on its square $G\times G$ (Theorem 1 and Proposition 2 in \cite{jamarcin}, cf. Remark \ref{mix}) which allowed us to define in \cite{MAR} \emph{mixed normal structures} for smooth manifolds with a locally free actions of any 3-dimensional Lie group. We feel that the present classification of all possible integrable complex structures for squares of 3-dimensional Lie groups provides an important starting point for tackling the same problem for actions of groups of higher dimension and associated structures occurring naturally.

Apart from these geometric motivations, the problem of classification and understanding of all 6-dimensional real Lie algebras admitting integrable complex structures is an outstanding and difficult one.

\section{Notations and preliminaries}

Inside the direct product $\g\times\g$ we keep the distinction between two copies of $\g$ by adding asterisks to the second one: any vector decorated with an asterisk is understood to lie in $\g^*=0\times \g$, while those without it lie in $\g=\g\times 0$. The product inherits the bracket operation on each factor from $\g$: $[u+u^*,v+v^*]=[u,v]+[u^*,v^*]$.

We also distinguish the two components of a complex structure $\J$ writing often $\J v=Jv+J^*v$ to indicate its $\g$- and $\g^*$-parts separately.

Since every complex structure will be integrable, we use the convention that every Nijenhuis bracket is equal to zero without further notice.

We recall for reference the well-known classification of 3-dimensional Lie algebras and the main result from \cite{jamarcin}.

\begin{proposition}{\cite{bia}}\label{alg} Let $\{e_1, e_2, e_3\}$ be a basis of $\mathbb{R}^3$. Up to isomorphism of Lie algebras, the following list yields all Lie brackets on $\mathbb{R}^3$
{\renewcommand\labelenumi{(\theenumi)}
\begin{enumerate}
\item $[e_1,e_3]= 0$, $[e_2,e_3]= 0$, $[e_1,e_2]= 0$ 
\item $[e_1,e_3]= 0$, $[e_2,e_3]= 0$, $[e_1,e_2]= e_1$
\item $[e_1,e_3]= 0$, $[e_2,e_3]= 0$, $[e_1,e_2]= e_3$ 
\item $[e_1,e_3]= e_1$, $[e_2,e_3]= \theta e_2$, $[e_1,e_2]= 0$ for $\theta\neq 0$ (the case $\theta = 1 $ is considered to be Bianchi's ninth type)
\item $[e_1,e_3]= e_1$, $[e_2,e_3]= e_1+e_2$, $[e_1,e_2]= 0$ 
\item $[e_1,e_3]= \theta e_1-e_2$, $[e_2,e_3]= e_1+\theta e_2$, $[e_1,e_2]= 0$  for $\theta > 0$
\item $[e_1,e_3]= e_2$, $[e_2,e_3]= e_1$, $[e_1,e_2]=e_3$ 
\item $[e_1,e_3]=-e_2 $, $[e_2,e_3]=e_1 $, $[e_1,e_2]= e_3$ 
\end{enumerate}}
\end{proposition}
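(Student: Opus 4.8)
The plan is to organize the classification by the dimension of the derived ideal $\g':=[\g,\g]$, an isomorphism invariant lying in $\{0,1,2,3\}$, and to reduce each value to a normal-form problem in linear algebra. (Equivalently one could encode the bracket as an endomorphism of $\mathbb{R}^3$ via the cross product and split it into symmetric and skew parts in the style of Milnor, but sorting by $\dim\g'$ maps most transparently onto the stated forms.) If $\dim\g'=0$ the algebra is abelian, which is (1). If $\dim\g'=3$ then $\g$ is perfect and hence non-solvable; its radical is then a proper ideal, and were it nonzero the quotient would be a nonzero semisimple algebra of dimension $1$ or $2$, which is impossible, so the radical vanishes, $\g$ is semisimple, and being $3$-dimensional it is simple. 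The Killing form $B$ is then nondegenerate and $\operatorname{ad}$-invariant, so $\operatorname{ad}$ embeds $\g$ into the orthogonal algebra of $B$; as that algebra also has dimension $3$, the embedding is an isomorphism, giving $\g\cong\mathfrak{so}(3)=\mathfrak{su}(2)$ when $B$ is definite and $\g\cong\mathfrak{so}(2,1)=\mathfrak{sl}(2,\mathbb{R})$ when it is indefinite; these are (8) and (7), separated by the signature of $B$.

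For $\dim\g'=1$, write $\g'=\mathbb{R}z$; since $\g'$ is an ideal, $[x,z]=\alpha(x)z$ for a linear functional $\alpha$, which vanishes on $z$. If $\alpha=0$ then $z$ is central and the bracket is $[x,y]=\omega(x,y)z$ for an alternating form $\omega$ of rank $2$; choosing $e_1,e_2$ with $\omega(e_1,e_2)=1$ and setting $e_3=z$ gives the Heisenberg algebra (3). If $\alpha\neq0$, set $e_1:=z$, normalize a complementary vector $e_2$ so that $[e_1,e_2]=e_1$, and correct a third vector $e_3\in\ker\alpha$ by a multiple of $z$ to remove the bracket $[e_2,e_3]$; one arrives at (2). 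The Jacobi identity is automatic in both subcases.

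The case $\dim\g'=2$ is the heart of the argument. First I would show that $\g'$ is abelian: if instead $\g'\cong\mathfrak{aff}(1)$ with $1$-dimensional derived ideal $\mathbb{R}w=[\g',\g']$, then since $\operatorname{ad}_{e_3}$ restricts to a derivation of $\g'$ and a direct computation shows every derivation of $\mathfrak{aff}(1)$ has image in $\mathbb{R}w$, we would obtain $\g'=[\g,\g]=[\g',\g']+\operatorname{ad}_{e_3}(\g')\subseteq\mathbb{R}w$, forcing $\dim\g'\le1$, a contradiction. Hence $\g=\g'\rtimes\mathbb{R}e_3$ with $\g'\cong\mathbb{R}^2$ abelian, and the whole structure is encoded in $A:=\operatorname{ad}_{e_3}|_{\g'}\in\mathfrak{gl}(2,\mathbb{R})$; the requirement $[\g,\g]=\g'$ forces $A$ to be invertible, and the Jacobi identity then holds automatically. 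Two such algebras are isomorphic exactly when the associated endomorphisms agree up to conjugation (a change of basis of $\g'$) and a nonzero scalar (a rescaling of $e_3$), so the problem reduces to the real Jordan trichotomy for invertible $A$ up to conjugacy and scaling: two distinct real eigenvalues, normalized to $1$ and $\theta$, give (4); a single Jordan block, with eigenvalue normalized to $1$, gives (5); and a complex-conjugate pair, normalized to $\theta\pm i$, gives (6).

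The main obstacle is the bookkeeping in this last case: one must pin down the exact parameter ranges and residual identifications---for instance that $\theta$ and $1/\theta$ yield isomorphic algebras in (4), so that the parameter is a genuine modulus, and that conjugation together with scaling lets one restrict $\theta$ to the stated range in (6)---and then check across all cases that the available invariants ($\dim\g'$, the trace $\operatorname{tr}A$ measuring unimodularity, the eigenvalue type, and the signature of the Killing form) separate the eight entries, so that the list is simultaneously exhaustive and free of repetitions. Verifying that no further normalization collapses two distinct entries, together with the automatic Jacobi identity noted above, completes the argument.
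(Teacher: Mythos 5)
The paper offers no proof of this proposition: it is quoted from Patera--Sharp--Winternitz--Zassenhaus \cite{bia}, so there is nothing internal to compare your argument against. Your route --- stratifying by $\dim[\g,\g]$, handling the perfect case via semisimplicity and the signature of the Killing form, and reducing the case $\dim[\g,\g]=2$ to the real Jordan form of $A=\operatorname{ad}_{e_3}|_{[\g,\g]}$ up to conjugacy and rescaling of $e_3$ --- is the standard structure-theoretic proof, and the individual steps you describe (the functional $\alpha$ in the one-dimensional case, the derivation computation excluding $[\g,\g]\cong\mathfrak{aff}(1)$, invertibility of $A$, the automatic Jacobi identity) are all sound.

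Two items in the bookkeeping you explicitly defer need attention, and one of them cannot be repaired. First, minor: your trichotomy for invertible $A$ omits the scalar matrices, which are diagonalizable with a repeated real eigenvalue and so fall under neither ``two distinct real eigenvalues'' nor ``a single Jordan block''; they normalize to $A=I$ and land in (4) with $\theta=1$, so this only needs to be said. Second, and substantively: rescaling $e_3$ multiplies both eigenvalues of $A$ by the same nonzero real number, so a complex pair $a\pm bi$ with $b\neq 0$ can only be normalized to $\theta\pm i$ with $\theta=|a/b|\ge 0$, not to $\theta>0$. The excluded value $\theta=0$ is the Euclidean algebra $\mathfrak{e}(2)$ (Bianchi VII$_0$): it is unimodular, hence not isomorphic to (6) for any $\theta>0$ or to (5), and not isomorphic to (4) with $\theta=-1$ because its adjoint has non-real eigenvalues; nor can it have $\dim[\g,\g]\neq 2$. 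Carried out honestly, your proof therefore terminates in a list in which (6) carries the range $\theta\ge 0$, and the statement as quoted, with $\theta>0$, omits one isomorphism class. This is a defect of the statement rather than of your strategy, but you should not assert that conjugation and scaling reach the stated range --- they do not.
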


We refer to such bases of these algebras (and by extension to bases $\{e_1, e_2, e_3, e_1^*, e_2^*, e_3^*\}$ of $\g\times\g$) as \emph{standard bases}. Note that $Aut(\g\times\g)$ does not preserve standard bases, only $Aut(\g)\times Aut(\g)$ does. We refrain from describing the moduli space of integrable complex structures under the action of full $Aut(\g\times\g)$ since the geometric applications mentioned above justify keeping the two factors of $\g\times\g$ distinct. We also note that these moduli spaces in general do not carry as rich and interesting structure as in the case of $\mathfrak{su}(2)\times\mathfrak{su}(2)$ and $\mathfrak{sl}(2,\mathbb{R})\times\mathfrak{sl}(2,\mathbb{R})$ described in detail in \cite{mag1,mag2}. Magnin proves that the two are complex manifolds, while e.g. the moduli space for the abelian algebra is odd-dimensional.

We write $\g_{n}$ or "algebra of type (n)" for the n-th entry in this list, using $\g_{n}^{\theta}$ where the parameter is needed.

%\begin{remark}\label{redu}
%Let $\mathfrak{h}$ be a Lie algebra with a complex structure $\J$. One can easily check that for any $v,w \in \mathfrak{h}$
%\begin{align*}
%N(v,w)= - N(\J v,\J w)= \J N(\J v,w)= \J N(v,\J w)
%\end{align*}
%\end{remark}

\begin{proposition}{\cite{jamarcin}}\label{main}
A real 6-dimensional Lie algebra of the form $\g \times \g$ (for some real 3-dimensional Lie algebra $\g$) carries an integrable complex structure if $\g$ is of type (1), (2), (3), (6), (7), (8), or (4) with parameter $\theta=1$ in Proposition \ref{alg} above. There is however no such structure for the type (5), and for all other parameters in (4).
\end{proposition}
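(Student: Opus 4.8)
The plan is to recast integrability in terms of the complexification and then separate existence from non-existence. For a left-invariant almost complex structure $\J$ the vanishing of $N_{\J}$ is equivalent to the $+i$-eigenspace $\mathfrak{q} := \{v - i\J v : v \in \g\times\g\}$ being a complex Lie subalgebra of $(\g\times\g)_{\mathbb{C}} = \g_{\mathbb{C}}\times\g_{\mathbb{C}}$, and conversely every complex subalgebra $\mathfrak{q}$ with $\dim_{\mathbb{C}}\mathfrak{q}=3$ and $\mathfrak{q}\cap\bar{\mathfrak{q}}=0$ (so that $\mathfrak{q}\oplus\bar{\mathfrak{q}}=(\g\times\g)_{\mathbb{C}}$ by dimension) arises this way, where the bar denotes conjugation with respect to the real form $\g\times\g$. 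The whole statement thus becomes the question of existence of such a $\mathfrak{q}$ for each type in Proposition~\ref{alg}.

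For the existence half I would simply produce one admissible $\mathfrak{q}$ per type. Type~(1) is immediate, since all brackets vanish and any $\mathbb{C}$-linear splitting works; type~(3) is nilpotent and covered by \cite{nilmanifolds,sal}; the semisimple types~(7) and~(8) have $\g_{\mathbb{C}}=\mathfrak{sl}(2,\mathbb{C})$, and one takes $\mathfrak{q}$ to be a diagonally embedded $\mathfrak{sl}(2,\mathbb{C})$ twisted by an automorphism chosen so that $\mathfrak{q}\cap\bar{\mathfrak{q}}=0$, which exists by classical results and by \cite{mag1,mag2}. For the remaining solvable types~(2), (6) and~(4) with $\theta=1$ I would write $\g=\mathfrak{n}\rtimes\mathbb{R} e_3$ and build $\mathfrak{q}$ from simultaneous eigenvectors of the commuting operators $\mathrm{ad}_{e_3}$ and $\mathrm{ad}_{e_3^{*}}$ on $\mathfrak{N}_{\mathbb{C}}$, where $\mathfrak{N}$ is the nilradical of $\g\times\g$: the point is to choose an invariant line or plane that is not conjugation-invariant, which is possible exactly when $\mathrm{ad}_{e_3}$ has a genuinely complex eigenvalue (type~6) or acts as a scalar on a $2$-dimensional ideal (type~4 with $\theta=1$). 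Checking in each case that $\mathfrak{q}$ is closed under the bracket and meets $\bar{\mathfrak{q}}$ trivially is a short direct computation.

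The hard part will be the non-existence for type~(5) and for type~(4) with $\theta\neq1$. In both cases $\mathfrak{n}=\mathrm{span}(e_1,e_2)$ is abelian, so $\g\times\g$ has the $4$-dimensional abelian nilradical $\mathfrak{N}=\mathrm{span}(e_1,e_2,e_1^{*},e_2^{*})$ together with the two commuting operators $\mathrm{ad}_{e_3}$ and $\mathrm{ad}_{e_3^{*}}$ acting on it. Assuming a subalgebra $\mathfrak{q}$ as above exists, I would project along $\mathfrak{N}_{\mathbb{C}}$ onto the $2$-dimensional abelian quotient spanned by $e_3,e_3^{*}$ via a map $\pi$ defined over $\mathbb{R}$, and set $\mathfrak{q}_0=\mathfrak{q}\cap\mathfrak{N}_{\mathbb{C}}$ and $\mathfrak{q}_1=\pi(\mathfrak{q})$. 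The relation $\mathfrak{q}\oplus\bar{\mathfrak{q}}=(\g\times\g)_{\mathbb{C}}$ forces $\mathfrak{q}_1+\bar{\mathfrak{q}}_1=\mathrm{span}_{\mathbb{C}}(e_3,e_3^{*})$, hence $\dim\mathfrak{q}_1\in\{1,2\}$ and $\dim\mathfrak{q}_0\in\{2,1\}$. Since $\mathfrak{N}$ is an ideal, $\mathfrak{q}_0$ is invariant under $T:=\alpha\,\mathrm{ad}_{e_3}+\beta\,\mathrm{ad}_{e_3^{*}}$ for any $\alpha e_3+\beta e_3^{*}$ lifting an element of $\mathfrak{q}_1$. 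The decisive computation is that for a Jordan block (type~5) and for distinct real eigenvalues (type~4 with $\theta\neq1$), every $T$-invariant subspace of the relevant dimension is spanned by real vectors, so any such $\mathfrak{q}_0$ satisfies $\mathfrak{q}_0=\bar{\mathfrak{q}}_0$, contradicting $\mathfrak{q}\cap\bar{\mathfrak{q}}=0$.

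The main obstacle, and the place where the two factors genuinely interact, is ruling out the accidental eigenvalue collisions of $T$ such as $\alpha=\beta\theta$ or $\alpha\theta=\beta$: these produce a two-dimensional eigenspace and hence \emph{a priori} allow a non-real invariant $\mathfrak{q}_0$. I expect a short but careful case analysis to show that in each such configuration the candidate $\mathfrak{q}_0$ still overlaps its conjugate, because the partner eigenvector needed to complete the decomposition $\mathfrak{q}_0\oplus\bar{\mathfrak{q}}_0=\mathfrak{N}_{\mathbb{C}}$ is real, closing the argument. This same eigenvalue bookkeeping makes the dichotomy transparent: a non-conjugation-invariant invariant subspace is available precisely for a complex eigenvalue (type~6) or a scalar action on the $2$-dimensional ideal (type~4 with $\theta=1$), which is exactly why those cases admit a structure while their neighbours~(4) with $\theta\neq1$ and~(5) do not.
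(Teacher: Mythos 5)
Your route is genuinely different from the paper's. The paper does not reprove Proposition~\ref{main} at all --- it imports it from \cite{jamarcin} --- and its own working method, visible in the sections that follow, is entirely real: write $\J$ in a standard basis, expand the three Nijenhuis brackets $N_{\J}(e_i,e_j)$, and exploit quasi-invariant vectors. Your complexification of the problem into finding a $3$-dimensional complex subalgebra $\mathfrak{q}\subset(\g\times\g)_{\mathbb{C}}$ with $\mathfrak{q}\cap\bar{\mathfrak{q}}=0$ is the standard alternative, and your non-existence argument is the strong part of the proposal: passing to the abelian ideal $\mathfrak{N}=\mathrm{span}(e_1,e_2,e_1^*,e_2^*)$, splitting $\dim\mathfrak{q}_0+\dim\mathfrak{q}_1=3$, and showing every admissible invariant subspace of $T=\alpha\,\mathrm{ad}_{e_3}+\beta\,\mathrm{ad}_{e_3^*}$ is real does work for type (5) and type (4) with $\theta\neq 1$. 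In fact the ``main obstacle'' you anticipate is not there: when $\dim\mathfrak{q}_1=1$ the condition $\mathfrak{q}_1+\bar{\mathfrak{q}}_1=\mathrm{span}_{\mathbb{C}}(e_3,e_3^*)$ forces $\alpha\bar{\beta}\notin\mathbb{R}$, and each feared collision ($\alpha=\beta$, $\alpha=\beta\theta$, $\alpha\theta=\beta$) would make $\alpha/\beta$ equal to the real number $1$, $\theta$ or $1/\theta$; so for $\theta\neq 1$ the four eigenvalues of $T$ are automatically distinct and the case analysis collapses. This argument is cleaner and more structural than a brute-force Nijenhuis computation, and it makes transparent why $\theta=1$ and type (6) are the exceptional existence cases.

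There is, however, one concrete error in the existence half: for types (7) and (8) a ``diagonally embedded $\mathfrak{sl}(2,\mathbb{C})$ twisted by an automorphism'' can never satisfy $\mathfrak{q}\cap\bar{\mathfrak{q}}=0$. For $\mathfrak{q}=\{(x,\phi(x))\}$ to be a complex subspace, $\phi$ must be $\mathbb{C}$-linear, and then $\mathfrak{q}\cap\bar{\mathfrak{q}}$ is the fixed set of the automorphism $\phi^{-1}\sigma\phi\sigma$ of $\mathfrak{sl}(2,\mathbb{C})$; since $A_1$ has no outer automorphisms this is some $\mathrm{Ad}_g$, and every $g\in SL(2,\mathbb{C})$ has a nontrivial centralizer in $\mathfrak{sl}(2,\mathbb{C})$, so the intersection is always nonzero. (This is consistent with the paper's Remark~\ref{switch} that only the abelian algebra admits a factor-switching structure.) The correct $\mathfrak{q}$ in these cases is solvable, of Borel type --- e.g.\ read off from the displayed Magnin matrices as $\mathrm{span}\{e_1-ie_2,\;e_1^*-ie_2^*,\;e_3-i\J e_3\}$ --- so the existence claim survives, but via a different subalgebra than the one you propose. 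A smaller slip: for type (2) the element $e_3$ is central and $\g_2\cong\mathfrak{aff}(\mathbb{R})\oplus\mathbb{R}$, so the uniform description $\g=\mathfrak{n}\rtimes\mathbb{R}e_3$ with $\mathfrak{n}$ the nilradical does not apply there; the construction should instead use $\mathrm{ad}_{e_2}$ acting on $\mathrm{span}(e_1,e_3)$.
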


We will now proceed to describe all integrable complex structures in each case. Each time the result will be given as: if $\J$ is an integrable complex structure on algebra (n), then in some standard base it is of the form prescribed. The method of proof will also be the same (and very simple) each time:

\begin{enumerate}
\item Supposing $\J$ is an integrable complex structure on $\g\times\g$ we derive the restrictions it must obey from the Nijenhuis brackets $N_{\J}(e_1,e_3)$, $N_{\J}(e_2,e_3)$, and $N_{\J}(e_1,e_2)$.
\item We describe different orbits of algebra automorphisms acting on $\g$, and decide which can and can not contain \emph{quasi-invariant} vectors, i.e. such $v\in\g$ that $\J v=\lambda v + J^*v$. We note that these orbits are particularly simple due to the low dimension, and their description will be given without proof.
\item We then check whether $J^*\g$ is 1- or 3-dimensional.
\end{enumerate}

These three simple steps are sufficient to describe integrable complex structures in each case because of the following observations.

\begin{remark}
A quasi-invariant vector must always exist (the characteristic polynomial of $J$ has a real root). Observe that $J^*v$ must then be non-zero, and that $\J J^*v = (-1-\lambda^2)v-\lambda J^*v$.
\end{remark}

\begin{remark}
The space $J^*\g$ is either 1- or 3-dimensional, in order to $\J$-invariant space $\J\g\cap\g$ to be of even dimension. In particular, the former case is equivalent (aside from the abelian case of algebra (1)) to $[J^*e_1,J^*e_2]=[J^*e_2,J^*e_3]=[J^*e_3,J^*e_1]=0$.
\end{remark}

\begin{remark}
If $J^*\g$ is 3-dimensional, then $\{e_1,e_2,e_3,\J e_1,\J e_2, \J e_3\}$ is a base of $\g\times
\g$ and $N_{\J}(e_1,e_3)=N_{\J}(e_2,e_3)=N_{\J}(e_1,e_2)=0$ is equivalent to $N_{\J}\equiv 0$, since $N_{\J}(v,w)= - N_{\J}(\J v,\J w)= \J N_{\J}(\J v,w)= \J N_{\J}(v,\J w)$. If $J^*\g$ is 1-dimensional, then most of the time so will be $J\g^*$ and $N_{\J}(v,w^*)$ will be more or less automatically zero.
\end{remark}

We will elaborate and see in each case that these observations together with restrictions on quasi-invariant vectors are sufficient to give a concise description of $\J$.

%Thus we will proceed as follows: we classify vectors in each algebra up to an automorphism, compute the three Nijenhuis brackets, check which vectors can be quasi-invariant. This will often be sufficient to determine $\J$ on the whole $\g\times\g$.

\section{Case (1), the abelian algebra}

This is the special case since the Nijenhuis bracket does not give any additional conditions. Every complex structure is thus integrable and takes the simplest possible form
$$
\left[
\begin{array}{cccccc}
0 & 0 & 0 & -1 & 0 & 0\\
0 & 0 & 0 & 0 & -1 & 0\\
0 & 0 & 0 & 0 & 0 & -1\\
1 & 0 & 0 & 0 & 0 & 0\\
0 & 1 & 0 & 0 & 0 & 0\\
0 & 0 & 1 & 0 & 0 & 0
\end{array}
\right]
$$
but not necessarily in a standard basis. For the sake of completeness we note that in some standard basis $\J$ takes either the form
$$
\left[
\begin{array}{cccccc}
X & A & 0 & -1-X^2-AY & -AX-AB & 0\\
Y & B & 0 & -XY-BY & -1-B^2-AY & 0\\
Z & C & \lambda & -XZ-YC-Z\lambda & -AZ-BC-C\lambda & -1-\lambda^2\\
1 & 0 & 0 & -X & -A & 0\\
0 & 1 & 0 & -Y & -B & 0\\
0 & 0 & 1 & -Z & -C & -\lambda
\end{array}
\right]\text{ or }
\left[
\begin{array}{cccccc}
0 & -1 & 0 & 0 & 0 & 0\\
1 & 0 & 0 & 0 & 0 & 0\\
0 & 0 & \lambda & 0 & 0 & -1-\lambda^2\\
0 & 0 & 0 & 0 & -1 & 0\\
0 & 0 & 0 & 1 & 0 & 0\\
0 & 0 & 1 & 0 & 0 & -\lambda
\end{array}
\right]
$$
(for any choice of $X$, $Y$, $Z$, $A$, $B$, $C$, and $\lambda$), as can be easily checked.

\section{Case (2)}

The algebra is given by a multiplication table $[e_1,e_3]= 0$, $[e_2,e_3]= 0$, $[e_1,e_2]= e_1$. Suppose that $\J$ is an integrable complex structure and
\begin{align*}
\J e_1 &=  Xe_1 + Ye_2 + Ze_3 + J^*e_1 \\
\J e_2 &=  Ae_1 + Be_2 + Ce_3 + J^*e_2 \\
\J e_3 &=  Pe_1 + Qe_2 + Re_3 + J^*e_3
\end{align*}
Then the Nijenhuis brackets are
\begin{align*}
N_{\J}(e_1,e_3) &=  [e_1,e_3] + \J[Je_1,e_3]+\J[e_1,Je_3] - [\J e_1,\J e_3] \\
&=0+0+Q\J e_1-XQe_1+YPe_1-[J^*e_1,J^*e_3]\\
N_{\J}(e_2,e_3) &=  [e_2,e_3] + \J[Je_2,e_3]+\J[e_2,Je_3] - [\J e_2,\J e_3] \\
&=0+0+P\J e_1-AQe_1+BPe_1-[J^*e_2,J^*e_3]\\
N_{\J}(e_1,e_2) &=  [e_1,e_2] + \J[Je_1,e_2]+\J[e_1,Je_2] - [\J e_1,\J e_2] \\
&=e_1+X\J e_1+B\J e_1-XBe_1+AYe_1-[J^*e_1,J^*e_2]\\
\end{align*}
and together give
\begin{align*}
Q\J e_1&= (XQ-YP)e_1+[J^*e_1,J^*e_3]\\
P\J e_1&= (AQ-BP)e_1+[J^*e_2,J^*e_3]\\
(X+B)\J e_1&= (-1+XB-AY)e_1+[J^*e_1,J^*e_2]
\end{align*}
The possible candidates for a quasi-invariant vector are of four types.
\begin{proposition}
Each automorphism of algebra (2) leaves directions $span\{e_1\}$ and $span\{e_3\}$ invariant. Each vector is equivalent to either $e_1$, $e_3$, a non-trivial combination of these two ($\alpha e_1+\gamma e_3$, $\alpha\gamma\neq 0$), or $e_2$ under some automorphism.
\end{proposition}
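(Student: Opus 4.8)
The plan is to first determine the automorphism group of the algebra $\g_2$ explicitly, then read off the orbit structure. An automorphism $\phi$ must preserve the derived algebra $[\g_2,\g_2]=\operatorname{span}\{e_1\}$, which immediately gives invariance of $\operatorname{span}\{e_1\}$. I would also compute the center and the structure of the adjoint action to pin down $\operatorname{span}\{e_3\}$: since $e_3$ is central-like in the sense that $\operatorname{ad}_{e_3}=0$ (both $[e_1,e_3]$ and $[e_2,e_3]$ vanish), and $e_2$ is the unique (up to the $e_1$-direction and scaling) element acting nontrivially via $\operatorname{ad}_{e_2}$ with $[e_1,e_2]=e_1$, the automorphism must send $e_2$ to $\lambda e_2 + (\text{something in }\operatorname{span}\{e_1,e_3\})$ and must fix the line spanned by $e_3$ up to the ambiguity absorbed into the other generators.

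Concretely, I would write a general $\phi$ as a matrix in the basis $\{e_1,e_2,e_3\}$ and impose that it respects all three bracket relations. Preserving $[e_1,e_2]=e_1$ together with $[e_1,e_3]=[e_2,e_3]=0$ forces the matrix into a constrained form: the $e_1$-row/column behavior is rigid because $e_1$ spans the derived algebra, and the requirement that $\operatorname{ad}_{\phi(e_3)}=0$ on the image forces $\phi(e_3)$ to lie in the kernel of all inner derivations, i.e. in $\operatorname{span}\{e_1,e_3\}$, but the $e_1$-component is irrelevant to the direction. This calculation is routine and should yield that $\phi$ preserves both $\operatorname{span}\{e_1\}$ and $\operatorname{span}\{e_3\}$, establishing the first assertion of the proposition.

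For the orbit classification, I would stratify an arbitrary vector $v=\alpha e_1+\beta e_2+\gamma e_3$ by which coordinates vanish and use the available automorphisms to normalize. If $\beta\neq 0$, the nontrivial action $\operatorname{ad}_{e_2}$ (exponentiated, or the corresponding inner automorphisms) can be used to absorb the $e_1$- and $e_3$-components into a pure multiple of $e_2$, placing $v$ in the orbit of $e_2$; the key point is that the $e_1$-direction is movable precisely because $[e_1,e_2]=e_1$. If $\beta=0$, then $v\in\operatorname{span}\{e_1,e_3\}$, and since both lines are preserved but can be rescaled independently, the orbit is determined by which of $\alpha,\gamma$ vanish: this gives the three remaining cases $e_1$ (when $\gamma=0$), $e_3$ (when $\alpha=0$), and the genuinely mixed $\alpha e_1+\gamma e_3$ with $\alpha\gamma\neq 0$ (which cannot be simplified further since no automorphism mixes $e_1$ and $e_3$).

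The main obstacle, such as it is, lies in verifying that the mixed combination $\alpha e_1+\gamma e_3$ with $\alpha\gamma\neq 0$ really forms a continuum of distinct orbits rather than collapsing — that is, confirming no automorphism sends $e_1\mapsto e_3$ or otherwise mixes the two preserved lines. This follows from the asymmetry between the two directions: $e_1$ lies in the derived algebra while $e_3$ does not, so they are distinguished by an invariant and cannot be interchanged. Once this rigidity is confirmed, the independent scalings on each line reduce the ratio $\alpha:\gamma$ to the only modulus, and the four-type classification in the statement follows. I expect the whole argument to be short, as the low dimension leaves little room for the automorphism group to act, which is exactly why the paper defers these orbit descriptions "without proof."
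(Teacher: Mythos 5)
The paper itself offers no proof of this proposition (the orbit descriptions are explicitly ``given without proof''), and your overall strategy --- determine $Aut(\g_2)$ from the bracket relations via invariant subspaces, then normalize a general vector --- is the right one; the full automorphism group is $e_1\mapsto ae_1$, $e_2\mapsto e_2+pe_1+re_3$, $e_3\mapsto ce_3$ with $ac\neq 0$, from which everything follows by inspection. However, two points in your write-up need repair. First, the kernel of all inner derivations (equivalently, the centre) of $\g_2$ is $span\{e_3\}$, not $span\{e_1,e_3\}$: indeed $[e_2,e_1]=-e_1\neq 0$, so $e_1$ is not central. This correction actually matters, because your fallback remark that ``the $e_1$-component is irrelevant to the direction'' is false --- if $\phi(e_3)$ were allowed to equal $e_1+e_3$, the line $span\{e_3\}$ would \emph{not} be preserved, so membership of $\phi(e_3)$ in a two-dimensional subspace does not establish the first assertion. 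Once you observe that the centre is exactly one-dimensional, invariance of $span\{e_3\}$ is immediate (and invariance of $span\{e_1\}$ follows from preservation of the derived algebra, as you say).

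Second, your description of the mixed stratum is backwards: the vectors $\alpha e_1+\gamma e_3$ with $\alpha\gamma\neq 0$ form a \emph{single} orbit, not a continuum of orbits, precisely because the two preserved lines are rescaled by independent nonzero parameters $a$ and $c$; the ratio $\alpha:\gamma$ is therefore not an invariant and is not ``the only modulus.'' The rigidity you correctly identify (no automorphism exchanges $span\{e_1\}$ with $span\{e_3\}$, since one is the derived algebra and the other the centre) only shows that this orbit is distinct from the orbits of $e_1$ and of $e_3$; it does not produce a modulus. Neither error affects the truth of the proposition as stated --- it only requires that every vector be equivalent to one of the four listed forms, and (like your normalization of a vector with $\beta\neq 0$ to ``a pure multiple of $e_2$'') it must in any case be read up to scale, since the $e_2$-coefficient is a genuine invariant of the action while quasi-invariance depends only on the direction.
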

We will show that
\begin{proposition}
The vector $e_3$ is quasi-invariant.
\end{proposition}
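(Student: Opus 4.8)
The plan is to extract the condition $P=Q=0$ directly from the three displayed consequences of the vanishing Nijenhuis brackets, since $e_3$ being quasi-invariant means precisely that the $\g$-part of $\J e_3$ is a multiple of $e_3$, i.e.\ that $P=Q=0$. To this end I would project each of the three identities onto the standard basis $\{e_1,e_2,e_3\}$ of $\g$, separating the $\g$- and $\g^*$-components. Because every bracket $[J^*e_i,J^*e_j]$ lies in $\g^*$ while $e_1\in\g$, the $\g$-part of each right-hand side is simply a multiple of $e_1$, whereas the $\g$-part of each left-hand side is the corresponding scalar times $Je_1=Xe_1+Ye_2+Ze_3$. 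The whole argument thus reduces to reading off the coefficients of $e_1$, $e_2$, and $e_3$.

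The key step is the $e_1$-coefficient of the third identity $(X+B)\J e_1=(-1+XB-AY)e_1+[J^*e_1,J^*e_2]$, which gives $(X+B)X=-1+XB-AY$, that is, $X^2+AY=-1$. Since $X$ is a real number, this forces $Y\neq 0$: were $Y=0$ we would obtain $X^2=-1$, which is impossible. With $Y\neq 0$ secured, I would then use the $e_2$-coefficients of the first two identities: the first yields $QY=0$ and the second yields $PY=0$, so both $Q=0$ and $P=0$. Hence $\J e_3=Re_3+J^*e_3$, which is exactly the assertion that $e_3$ is quasi-invariant.

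The main (and essentially the only) subtlety is the observation that the reality of the structure constant $X$ is what rules out $Y=0$; everything else is routine coefficient bookkeeping and does not even require the $\g^*$-parts of the three identities. I expect no genuine obstacle beyond keeping the projections straight. I would also note that this proof makes no appeal to the preceding orbit description of quasi-invariant candidates: it shows outright that $e_3$ is quasi-invariant for \emph{every} integrable $\J$, which is the cleanest possible form of the statement.
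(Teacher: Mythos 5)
Your proof is correct and rests on the same computations as the paper's: the $e_1$-coefficient of the third identity gives $X^2+AY=-1$ (hence $Y\neq 0$), and the $e_2$-coefficients of the first two identities give $QY=PY=0$, so $P=Q=0$. The paper packages exactly this as a proof by contradiction (if $P$ or $Q$ were nonzero, $e_1$ would be quasi-invariant, forcing $Y=Z=0$ and then $\lambda^2=-1$ from the same $e_1$-coefficient), so your direct version is a clean reorganization of the identical coefficient bookkeeping rather than a genuinely different method.
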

%\begin{proof}
Suppose it is not, and so at least one of $P$ and $Q$ is non-zero, and hence $e_1$ is quasi-invariant. Thus $Y=Z=0$ and $X=\lambda$ and from the last equation we have
$$
(\lambda+B)\lambda=(-1+\lambda B)
$$
which gives $\lambda^2=-1$, a contradiction.
%\end{proof}
So $e_3$ must be quasi-invariant and also $X+B=0$ (lest $e_1$ be quasi-invariant again) and also $-1-X^2-AY=0$ (to satisfy the last equation). This implies that both $A$ and $Y$ cannot vanish so $e_2$ can not be quasi-invariant as well. It is also a matter of simple computation to see that neither can a non-trivial combination of $e_1$ and $e_3$, but we don't need that now.

We see that $J^*\g_2$ must be one-dimensional, spanned by $J^*e_3$. Thus we have so far
\begin{align*}
\J e_1 &=  Xe_1 + Ye_2 + Ze_3 + \kappa J^*e_3 \\
\J e_2 &=  Ae_1 -Xe_2 + Ce_3 + \tau J^*e_3 \\
\J e_3 &=  \lambda e_3 + J^*e_3
\end{align*}
Without loss of generality we can assume $\tau$ to be zero (because there is an algebra isomorphism fixing $e_1$ and $e_3$ and taking $e_2$ to $e_2-\tau e_3$). In this point we exchange a complicated description of $\J$ in a fixed base for a more elegant description in some base, albeit also a standard one. We proceed to compute
\begin{align*}
-e_1 =\J(Xe_1 + Ye_2 + Ze_3 + \kappa J^*e_3)=&X^2e_1 +XYe_2 +XZe_3 -X\kappa J^*e_3\\&+ AYe_1 -XYe_2 + CYe_3\\&+Z\lambda e_3+ZJ^*e_3\\&+(-1-\lambda^2)\kappa e_3-\lambda\kappa J^*e_3\\
-e_2 =\J(Ae_1 -Xe_2 + Ce_3)=&  AXe_1 + AYe_2 + AZe_3 + A\kappa J^*e_3\\
&- AXe_1 + X^2e_2 -XCe_3\\
&+C\lambda e_3+CJ^*e_3
\end{align*}
that give
$$
\begin{cases}
0=-1-X^2-AY\\
0=CY+XZ+Z\lambda-(1+\lambda^2)\kappa\\
0=X\kappa + Z -\lambda\kappa\\
0=AZ-XC+C\lambda\\
0=A\kappa+C
\end{cases}
\text{\quad or equivalently \quad}
\begin{cases}
0=-1-X^2-AY\\
Z=\kappa(\lambda -X)\\
C=-\kappa A
\end{cases}
$$
And we summarise it in the following way.
\begin{proposition}
Each integrable structure $\J$ on $\g_{2}\times \g_{2}$ is (in some standard basis) of the form
$$
\left[
\begin{array}{cccccc}
X & \frac{-1-X^2}{Y} & 0 & 0 & 0 & 0\\
Y & -X & 0 & 0 & 0 & 0\\
\kappa(\lambda-X) & \kappa \frac{1+X^2}{Y} & \lambda & \kappa^*(-1-\lambda^2) & 0 & -1-\lambda^2\\
0 & 0 & 0 & X^* & \frac{-1-(X^*)^2}{Y^*} & 0\\
0 & 0 & 0 & Y^* & -X^* & 0\\
\kappa & 0 & 1 & \kappa^*(-\lambda-X^*) & \kappa^* \frac{1+(X^*)^2}{Y^*} & -\lambda
\end{array}
\right]
$$
for some numbers $\lambda$, $X$, $X^*$, $\kappa$, $\kappa^*$ and non-zero $Y$ and $Y^*$. Any such choice of numbers provides an integrable complex structure.
\end{proposition}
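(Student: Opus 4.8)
The plan is to finish pinning down $\J$ in the two remaining columns, $\J e_1^*$ and $\J e_2^*$, and then to settle the converse by direct computation. The analysis preceding the statement already fixes the first three columns: after normalising the standard basis of the second factor so that $J^*e_3=e_3^*$ and killing the $\tau$-term by an isomorphism of the first factor, the integrability conditions on $\g$ collapse to $AY=-1-X^2$, $Z=\kappa(\lambda-X)$ and $C=-\kappa A$, which is exactly the first block of the matrix. Since $\J^2=-\mathrm{id}$, the first remark above applied to the quasi-invariant $e_3$ forces $\J e_3^*=\J J^*e_3=(-1-\lambda^2)e_3-\lambda e_3^*$, supplying the last column and showing that $e_3^*$ is itself quasi-invariant with $Je_3^*=(-1-\lambda^2)e_3$ a non-zero multiple of $e_3$.

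For $\J e_1^*$ and $\J e_2^*$ I would run the identical argument on the second factor, which is again of type (2). Forming $N_\J(e_1^*,e_3^*)$, $N_\J(e_2^*,e_3^*)$ and $N_\J(e_1^*,e_2^*)$ and separating their $\g$- and $\g^*$-components, which lie in complementary subspaces and so must vanish independently, mirrors the computation opening this section and yields $B^*=-X^*$, $A^*Y^*=-1-(X^*)^2$, together with $[Je_1^*,Je_2^*]=[Je_2^*,Je_3^*]=[Je_1^*,Je_3^*]=0$. By the second remark above (read for the starred factor) the vanishing of these three brackets is equivalent to $J\g^*$ being one-dimensional; since $Je_3^*$ is a non-zero multiple of $e_3$, that line is $\operatorname{span}(e_3)$, so $Je_1^*=\kappa^*(-1-\lambda^2)e_3$ and, after a further isomorphism of the second factor setting $\tau^*=0$, $Je_2^*=0$. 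Imposing $\J^2 e_1^*=-e_1^*$ and $\J^2 e_2^*=-e_2^*$ then produces $Z^*=\kappa^*(-\lambda-X^*)$ and $C^*=-\kappa^* A^*$, completing the matrix; the factors $-1-\lambda^2$ and $-\lambda$ that distinguish the two blocks are merely an artefact of normalising $J^*e_3=e_3^*$ while $Je_3^*=(-1-\lambda^2)e_3$ is forced. The step I expect to be the main obstacle is precisely this one-dimensionality of $J\g^*$: it is where the qualifier ``most of the time'' in the third remark must be discharged, and it does not follow formally from the one-dimensionality of $J^*\g$, since a generic $3$-plane and its complement can carry complex subspaces of different dimensions. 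Here it is rescued by the non-abelianness of $\g_2$, which forbids three pairwise-commuting vectors from spanning it.

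For the converse I would verify directly that the displayed matrix satisfies $\J^2=-\mathrm{id}$ and $N_\J\equiv 0$ for every admissible choice of parameters. Computing $\J^2$ on each standard basis vector returns $-\mathrm{id}$ once the relations $AY=-1-X^2$, $Z=\kappa(\lambda-X)$, $C=-\kappa A$ and their starred analogues are substituted, these relations being exactly what makes the spurious entries cancel, so $\J$ is an almost complex structure. For integrability it suffices, by bilinearity and antisymmetry, to test the fifteen basis pairs. The three within-$\g$ and three within-$\g^*$ brackets vanish by reversing the derivations above; the nine mixed brackets $N_\J(e_i,e_j^*)$ vanish for the reason indicated in the third remark, namely that every surviving term routes a bracket through $Je_j^*\in\operatorname{span}(e_3)$ or $J^*e_i\in\operatorname{span}(e_3^*)$, and $[e_1,e_3]=[e_2,e_3]=0$ annihilates all of these. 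This last verification is routine but lengthy, and I would compress it using the $\g\leftrightarrow\g^*$ symmetry together with the identity $N_\J(v,w)=\J N_\J(\J v,w)$ recorded in the third remark.
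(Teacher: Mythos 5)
The proposal is correct and follows essentially the same route as the paper: Nijenhuis brackets force $e_3$ to be quasi-invariant with $X+B=0$ and $AY=-1-X^2$, whence $J^*\g_2$ is one-dimensional, and a basis change killing $\tau$ plus $\J^2=-\mathrm{id}$ pins down the remaining entries. You merely make explicit two things the paper leaves implicit --- the mirrored argument on the starred factor (including the correct observation that $\dim J\g_2^*=1$ needs its own derivation via the starred Nijenhuis equations and the non-abelianness of $\g_2$) and the converse verification --- both of which are carried out soundly.
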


\section{Case (3), the Heisenberg algebra}

The algebra is given by a multiplication table $[e_1,e_3]= 0$, $[e_2,e_3]= 0$, $[e_1,e_2]= e_3$.
Suppose that $\J$ is an integrable complex structure and
\begin{align*}
\J e_1 &=  Xe_1 + Ye_2 + Ze_3 + J^*e_1 \\
\J e_2 &=  Ae_1 + Be_2 + Ce_3 + J^*e_2 \\
\J e_3 &=  Pe_1 + Qe_2 + Re_3 + J^*e_3
\end{align*}
Then the Nijenhuis brackets are
\begin{align*}
N_{\J}(e_1,e_3) &=  [e_1,e_3] + \J[Je_1,e_3]+\J[e_1,Je_3] - [\J e_1,\J e_3] \\
&=0+0+Q\J e_3-XQe_3+YPe_3-[J^*e_1,J^*e_3]\\
N_{\J}(e_2,e_3) &=  [e_2,e_3] + \J[Je_2,e_3]+\J[e_2,Je_3] - [\J e_2,\J e_3] \\
&=0+0+P\J e_3-AQe_3+BPe_3-[J^*e_2,J^*e_3]\\
N_{\J}(e_1,e_2) &=  [e_1,e_2] + \J[Je_1,e_2]+\J[e_1,Je_2] - [\J e_1,\J e_2] \\
&=e_3+X\J e_3+B\J e_3-XBe_3+AYe_3-[J^*e_1,J^*e_2]\\
\end{align*}
and together give
\begin{align*}
Q\J e_3&= (XQ-YP)e_3+[J^*e_1,J^*e_3]\\
P\J e_3&= (AQ-BP)e_3+[J^*e_2,J^*e_3]\\
(X+B)\J e_3&=(-1+XB-AY)e_3+[J^*e_1,J^*e_2]
\end{align*}
The possible candidates for a quasi-invariant vector are of two types.
\begin{proposition}
Each automorphism of algebra (3) leaves the direction $span\{e_3\}$ invariant. Each vector is equivalent either to $e_1$ or $e_3$ by some automorphism.
\end{proposition}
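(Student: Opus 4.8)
The plan is to determine the full automorphism group $Aut(\g_3)$ explicitly, which is painless in this dimension, and then read off the orbits. The first assertion is immediate from intrinsic considerations: in the Heisenberg algebra the bracket $[e_1,e_2]=e_3$ together with $[e_1,e_3]=[e_2,e_3]=0$ shows that $span\{e_3\}$ is simultaneously the center and the derived algebra $[\g_3,\g_3]$; indeed a vector $ae_1+be_2+ce_3$ is central iff its brackets with $e_1$ and $e_2$ both vanish, i.e. iff $a=b=0$. Any Lie algebra automorphism preserves the center, hence preserves $span\{e_3\}$.

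For the second assertion I would write a candidate automorphism $\phi$ in the standard basis and impose compatibility with the bracket. Since $\phi$ preserves the center we already know $\phi(e_3)=c_3 e_3$ for some $c_3\neq 0$. Writing $\phi(e_1)=a_1e_1+a_2e_2+a_3e_3$ and $\phi(e_2)=b_1e_1+b_2e_2+b_3e_3$, the single remaining relation $\phi(e_3)=\phi([e_1,e_2])=[\phi e_1,\phi e_2]$ reduces to $(a_1b_2-a_2b_1)e_3=c_3e_3$. Hence $Aut(\g_3)$ consists exactly of the maps whose $(e_1,e_2)$-block $\left[\begin{smallmatrix}a_1&b_1\\a_2&b_2\end{smallmatrix}\right]$ is an arbitrary element of $GL(2,\mathbb{R})$, with $a_3,b_3$ free and $c_3=a_1b_2-a_2b_1$ determined. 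In particular the induced action on the quotient $\g_3/span\{e_3\}$ is the full $GL(2,\mathbb{R})$.

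The orbit description then follows by inspection. Given a vector $v=ae_1+be_2+ce_3$ with $(a,b)\neq(0,0)$, I choose the automorphism with $\phi(e_1)=v$: set $a_1=a$, $a_2=b$, $a_3=c$ and complete $(a,b)$ to a basis of $\mathbb{R}^2$ by any $(b_1,b_2)$ with $a_1b_2-a_2b_1\neq0$ (possible precisely because $(a,b)\neq0$), with $b_3$ arbitrary; thus $v$ lies in the orbit of $e_1$. If instead $(a,b)=(0,0)$ and $c\neq0$, then $v=ce_3$ and, since $c_3$ ranges over all nonzero reals, some $\phi$ sends $e_3$ to $ce_3=v$, so $v$ lies in the orbit of $e_3$. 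These two cases exhaust all nonzero vectors.

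I do not anticipate a genuine obstacle here: the only point requiring a word of care is the completion step in the last paragraph, namely that a nonzero pair $(a,b)$ can always be extended to an invertible $2\times2$ matrix, which is elementary. The whole content is that $Aut(\g_3)$ surjects onto $GL(2,\mathbb{R})$ on the non-central part while acting by all nonzero scalars on the center $span\{e_3\}$.
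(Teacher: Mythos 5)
Your proof is correct and complete. Note that the paper deliberately gives no proof of this proposition (it announces that the orbit descriptions ``will be given without proof'' since they are elementary in low dimension), so there is nothing to compare against; your argument --- identifying $span\{e_3\}$ as the center (equivalently the derived algebra), computing $Aut(\g_3)$ as the maps with arbitrary $GL(2,\mathbb{R})$ block on $span\{e_1,e_2\}$ modulo the center, free translation coefficients $a_3,b_3$, and $c_3$ equal to the determinant of the block --- is exactly the standard verification the author had in mind, and your two orbit cases do exhaust the nonzero vectors.
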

Again, it is easy to see that the first two equation imply directly that $e_3$ is quasi invariant (since $e_3$ being quasi-invariant means that $P=Q=0$). Thus we have $\J e_3=\lambda e_3+J^*e_3$ and the equations now read
\begin{align*}
0&=[J^*e_1,J^*e_3]\\
0&=[J^*e_2,J^*e_3]\\
(X+B)\J e_3&=(-1+XB-AY)e_3+[J^*e_1,J^*e_2]
\end{align*}
We have now two options.
\begin{enumerate}
\item Either $X+B\neq 0$ and $J^*e_1$, $\frac{1}{X+B}J^*e_2$ and $J^*e_3$ is a standard basis for $\g^*_3$. Then we also have $(X+B)\lambda=-1+XB-AY$ and we compute from $-e_1 =\J(Xe_1 + Ye_2 + Ze_3 + J^*e_1)$ and $-e_2 =\J(Ae_1 + Be_2 + Ce_3+ J^*e_2)$ that
%\begin{align*}
%-e_1 =&\J(Xe_1 + Ye_2 + Ze_3 + J^*e_1)\\
%=&  X^2e_1 +XYe_2 +XZe_3 +X J^*e_1\\
%&+ AYe_1 + BYe_2 + CYe_3+YJ^*e_2\\
%&+Z\lambda e_3+ZJ^*e_3\\
%&+\J (J^*e_1)\\
%-e_2 =&\J(Ae_1 + Be_2 + Ce_3+ J^*e_2)\\
%=&  AXe_1 + AYe_2 + AZe_3 + A J^*e_1\\
%&+ ABe_1 + B^2e_2 + BCe_3+ BJ^*e_2\\
%&+C\lambda e_3+CJ^*e_3\\
%&+\J (J^*e_2)
%\end{align*}
\begin{align*}
\J J^*e_1&=(-1-X^2-AY)e_1 - Y(X+B)e_2-(XZ+CY+Z\lambda)e_3\\
&-XJ^*e_1-YJ^*e_2-ZJ^*e_3\\
\J J^*e_2&=A(X+B)e_1 - (1+B^2+AY)e_2-(AZ+BC+C\lambda)e_3\\
&-AJ^*e_1-BJ^*e_2-CJ^*e_3
\end{align*}
\item Or $X+B=0$ and $im J^*$ is one dimensional, generated by $J^*e_3$. Then we compute from $-e_1 =\J(Xe_1 + Ye_2 + Ze_3 + \kappa J^*e_3)$ and $-e_2 =\J(Ae_1 + -Xe_2 + Ce_3+\tau J^*e_3)$ that give
%\begin{align*}
%-e_1 =&\J(Xe_1 + Ye_2 + Ze_3 + \kappa J^*e_3)\\
%=&  X^2e_1 +XYe_2 +XZe_3 -B\kappa J^*e_3\\
%&+ AYe_1 -XYe_2 + CYe_3\\
%&+Z\lambda e_3+ZJ^*e_3\\
%&+(-1-\lambda^2)\kappa e_3-\lambda\kappa J^*e_3\\
%-e_2 =&\J(Ae_1 + -Xe_2 + Ce_3+\tau J^*e_3)\\
%=&  AXe_1 + AYe_2 + AZe_3 + A\kappa J^*e_3\\
%&- AXe_1 + X^2e_2 -XCe_3\\
%&+C\lambda e_3+CJ^*e_3\\
%&+(-1-\lambda^2)\tau e_3-\lambda\tau J^*e_3
%\end{align*}
$$
\begin{cases}
0=-1-X^2-AY\\
0=XZ+CY+Z\lambda+\kappa(-1-\lambda^2)\\
0=X\kappa+Y\tau+Z-\lambda\kappa\\
0=AZ-XC+C\lambda+\tau(-1-\lambda^2)\\
0=A\kappa-X\tau+C-\tau\lambda
\end{cases} \text{\quad or simply \quad} \begin{cases}
0=-1-X^2-AY\\
\tau=\frac{AZ-XC+C\lambda}{1+\lambda^2}\\
\kappa=\frac{XZ+CY+Z\lambda}{1+\lambda^2}
\end{cases}
$$
As before, we can take a new basis, $\{e_1-\kappa e_3,e_2-\tau e_3, e_3\}$ to kill off $\tau$ and $\kappa$ (this does not compromise the condition $X+B=0$, where these are now coefficients in the new basis, either by straightforward computation or by noting that the change of base cannot interfere with $im J^*$). The resulting equations
$$
\begin{cases}
0=AZ-XC+C\lambda\\
0=XZ+CY+Z\lambda
\end{cases}
$$
quickly give $Z=C=0$.
\end{enumerate}
We summarise this in the following
\begin{proposition}
Each integrable complex structure $\J$ is (in some standard basis) of one of the two following forms. Either it is
$$
\left[
\begin{array}{cccccc}
X & \frac{-1-X^2}{Y} & 0 & 0 & 0 & 0\\
Y & -X & 0 & 0 & 0 & 0\\
0 & 0 & \lambda & 0 & 0 & -1-\lambda^2\\
0 & 0 & 0 & X^* & \frac{-1-(X^*)^2}{Y^*} & 0\\
0 & 0 & 0 & Y^* & -X^* & 0\\
0 & 0 & 1 & 0 & 0 & -\lambda
\end{array}
\right]
$$
for some numbers $\lambda$, $X$, $X^*$, $Y$, $Y^*$, or
$$
\left[
\begin{array}{cccccc}
X & A & 0 & -1-X^2-AY & -A & 0\\
Y & B & 0 & - Y(X+B) & \frac{-1-X^2-AY}{X+B} & 0\\
Z & C & \lambda & -XZ-CY-Z\lambda & \frac{-AZ-BC-C\lambda}{X+B} & -1-\lambda^2\\
1 & 0 & 0 & -X  & \frac{-A}{X+B} & 0\\
0 & (X+B) & 0 & -Y(X+B) & -B & 0\\
0 & 0 & 1 & -Z & \frac{-C}{X+B} & -\lambda
\end{array}
\right]
$$
for some $X$, $Y$, $Z$, $A$, $B$, and $C$ (under assumption that $X+B\neq 0$) but with dependent $\lambda=\frac{-1+XB-AY}{X+B}$. Any such choice of numbers provides an integrable complex structure.
\end{proposition}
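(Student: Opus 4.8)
The plan is to organise the case analysis begun above into the two announced normal forms and then to settle the converse. The starting point is the reduced system obtained once $e_3$ has been shown to be quasi-invariant,
\begin{align*}
0&=[J^*e_1,J^*e_3],\\
0&=[J^*e_2,J^*e_3],\\
(X+B)\J e_3&=(-1+XB-AY)e_3+[J^*e_1,J^*e_2].
\end{align*}
The first thing to record is that the dichotomy $\dim J^*\g_3\in\{1,3\}$ is governed exactly by $X+B$. The $\g^*$-part of the third equation reads $(X+B)J^*e_3=[J^*e_1,J^*e_2]$, and since $J^*e_3\neq 0$ (a quasi-invariant vector has nonzero starred part), the right-hand side is nonzero precisely when $X+B\neq 0$. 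When it vanishes, all three commutators vanish; because a Heisenberg algebra contains no three pairwise-commuting independent vectors, the image cannot be $3$-dimensional and is therefore the line $\mathrm{span}\{J^*e_3\}$. When $X+B\neq 0$ the vectors $J^*e_1,J^*e_2,J^*e_3$ are independent and $J^*\g_3$ is all of $\g_3^*$.

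In the three-dimensional case $X+B\neq 0$ I would rescale to make $\{J^*e_1,\tfrac{1}{X+B}J^*e_2,J^*e_3\}$ a standard Heisenberg basis of $\g_3^*$: the identities $[J^*e_i,J^*e_3]=0$ and $\tfrac{1}{X+B}[J^*e_1,J^*e_2]=J^*e_3$ are exactly its structure equations. Reading the $e_3$-component of the third equation gives the dependent value $\lambda=\frac{-1+XB-AY}{X+B}$, and the two columns $\J J^*e_1,\J J^*e_2$ are forced by $\J^2=-\mathrm{id}$ (together with $\J J^*e_3=(-1-\lambda^2)e_3-\lambda J^*e_3$). Rewriting $\J$ in this standard basis produces the second matrix. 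For the converse I would invoke the reduction noted above: since $\{e_1,e_2,e_3,\J e_1,\J e_2,\J e_3\}$ is a basis, the symmetries $N_\J(v,w)=-N_\J(\J v,\J w)=\J N_\J(\J v,w)$ make vanishing of $N_\J$ on the three pairs $(e_1,e_3),(e_2,e_3),(e_1,e_2)$—true by construction—equivalent to $N_\J\equiv 0$, so every admissible choice of $X,Y,Z,A,B,C$ really is integrable.

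In the one-dimensional case $X+B=0$ I would put $J^*e_1=\kappa J^*e_3$ and $J^*e_2=\tau J^*e_3$ and extract the system above from $\J^2=-\mathrm{id}$, whose only content beyond $-1-X^2-AY=0$ are the formulas for $\kappa$ and $\tau$. The decisive step is the change of basis $\{e_1-\kappa e_3,\,e_2-\tau e_3,\,e_3\}$; since $e_3$ is central and $[e_1-\kappa e_3,e_2-\tau e_3]=e_3$, this is an automorphism of $\g_3$ fixing $e_3$, it does not touch $\mathrm{im}\,J^*$, and it sets $\kappa=\tau=0$ without spoiling $X+B=0$. The residual equations $AZ+C(\lambda-X)=0$ and $Z(X+\lambda)+CY=0$ form a linear system in $(Z,C)$ whose determinant is $AY-(\lambda^2-X^2)=-1-\lambda^2\neq 0$ after using $-1-X^2-AY=0$; hence $Z=C=0$ and one obtains the first matrix. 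Its integrability is a direct check: the only nonzero brackets $[e_1,e_2]=e_3$ and $[e_1^*,e_2^*]=e_3^*$ are absorbed by the two-plane blocks while all mixed brackets vanish because $e_3,e_3^*$ are central and $J\g^*$ is again one-dimensional.

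I expect the one-dimensional case to be the main obstacle. One must justify that the normalisation by $\{e_1-\kappa e_3,e_2-\tau e_3,e_3\}$ is a genuine automorphism that neither violates the standing hypothesis $X+B=0$ (now read in the new basis) nor reintroduces $\kappa,\tau$, and then carry out the determinant computation that forces $Z=C=0$; this is the one place where the argument is not simply reading off a basis. By contrast the three-dimensional case is comparatively painless, the converse being handed to us by the reduction to three brackets; there the only care needed is to confirm that $X+B\neq 0$ indeed makes $J^*e_1,J^*e_2,J^*e_3$ a basis so that this reduction applies.
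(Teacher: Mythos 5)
Your proposal is correct and follows essentially the same route as the paper: the same dichotomy on $X+B$ (equivalently on $\dim J^*\g_3$), the same rescaled standard basis $\{J^*e_1,\tfrac{1}{X+B}J^*e_2,J^*e_3\}$ of $\g_3^*$ with $\J^2=-id$ supplying the remaining columns in the three-dimensional case, and the same change of basis $\{e_1-\kappa e_3,\,e_2-\tau e_3,\,e_3\}$ followed by elimination of $Z$ and $C$ in the one-dimensional case. The only cosmetic difference is that you obtain $Z=C=0$ from the determinant $AY-(\lambda^2-X^2)=-1-\lambda^2\neq 0$ of the residual $2\times 2$ system, where the paper reads the same conclusion off its list of equations directly; both are valid.
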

\begin{remark}
Observe that the first complex structure has invariant subspaces in both $\g_3$ and $\g^*_3$ and the second has none in either. There can be no "mixed" situation, where $\g_3$ has an invariant subspace and $\g_3^*$ does not (or vice versa), since it follows from the discussion that the dimension of $J^*\g_3$ is equal to the dimension of $J\g^*_3$. That special behaviour will be possible only in algebra (6).
\end{remark}
\begin{remark}
Observe that, while we have many degrees of freedom in the second case, we cannot arrange for $J\g_3$ and $J^*\g^*_3$ to be trivial subspaces -- indeed, we will see (cf. Remark \ref{switch}) that the only algebra that allows an integrable complex structure to exchange its factors is the abelian algebra (1).
\end{remark}
\begin{remark}
We can however arrange for other quasi-invariant vectors apart from $e_3$, unlike in the previous and following algebras.
\end{remark}
\begin{remark}
We note that this case was also treated in \cite{mag2}. While further below we omit details about algebras $\g_{7}$ and $\g_{8}$ simply citing Magnin's work, Heisenberg algebra $\g_{3}$ has sufficiently different $Aut(\g\times\g)$ (which is the group acting in \cite{mag2}) from $Aut(\g)\times Aut(\g)$ (acting above) to merit explicit treatment here.
\end{remark}

\section{Algebra (4) with $\theta=1$}

The algebra is given by a multiplication table $[e_1,e_3]= e_1$, $[e_2,e_3]= e_2$, $[e_1,e_2]= 0$. Suppose that $\J$ is an integrable complex structure and
\begin{align*}
\J e_1 &=  Xe_1 + Ye_2 + Ze_3 + J^*e_1 \\
\J e_2 &=  Ae_1 + Be_2 + Ce_3 + J^*e_2 \\
\J e_3 &=  Pe_1 + Qe_2 + Re_3 + J^*e_3
\end{align*}
Then the Nijenhuis brackets are
\begin{align*}
N_{\J}(e_1,e_3) &=  [e_1,e_3] + \J[Je_1,e_3]+\J[e_1,Je_3] - [\J e_1,\J e_3] \\
&=e_1+X\J e_1+Y\J e_2+R\J e_1-XRe_1-YRe_2+ZPe_1+ZQe_2-[J^*e_1,J^*e_3]\\
N_{\J}(e_2,e_3) &=  [e_2,e_3] + \J[Je_2,e_3]+\J[e_2,Je_3] - [\J e_2,\J e_3] \\
&=e_2+A\J e_1+B\J e_2+R\J e_2-ARe_1-BRe_2+CPe_1+CQe_2-[J^*e_2,J^*e_3]\\
N_{\J}(e_1,e_2) &=  [e_1,e_2] + \J[Je_1,e_2]+\J[e_1,Je_2] - [\J e_1,\J e_2] \\
&=0-Z\J e_2+C\J e_1-XCe_1-YCe_2+AZe_1+BZe_2-[J^*e_1,J^*e_2]\\
\end{align*}
and together, after expanding all $\J$ leave us with
$$
\begin{cases}
0=1+X^2+AY+ZP\\
0=XY+YB+ZQ\\
0=XZ+YC+RZ\\
[J^*e_1,J^*e_3]=(X+R)J^*e_1+YJ^*e_2\\
0=AX+AB+CP\\
0=1+B^2+AY+CQ\\
0=AZ+BC+RC\\
[J^*e_2,J^*e_3]=AJ^*e_1+(B+R)J^*e_2\\
[J^*e_1,J^*e_2]=C J^*e_1-ZJ^*e_2
\end{cases}
$$
The possible candidates for a quasi-invariant vector are of two types.
\begin{proposition}
Each automorphism of algebra (4) must leave the subspace $span\{e_1,e_2\}$ invariant.
Each vector is equivalent to either $e_1$ or $e_3$ under some automorphism. 
\end{proposition}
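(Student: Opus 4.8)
The plan is to prove both assertions by first computing $Aut(\g)$ explicitly and then reading off the orbit structure. For the first assertion I would identify the commutator ideal: from $[e_1,e_3]=e_1$, $[e_2,e_3]=e_2$, $[e_1,e_2]=0$ one sees immediately that $[\g,\g]=span\{e_1,e_2\}$. Being the derived algebra, this is a characteristic ideal and is therefore preserved by every automorphism, which is precisely the claim.

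To determine $Aut(\g)$ I would write a general automorphism $\phi$. By the first assertion $\phi(e_1),\phi(e_2)\in span\{e_1,e_2\}$, say $\phi(e_1)=ae_1+be_2$ and $\phi(e_2)=ce_1+de_2$ with $ad-bc\neq0$, while $\phi(e_3)=pe_1+qe_2+re_3$ with $r\neq0$ by invertibility. The decisive step is the relation $[\phi e_1,\phi e_3]=\phi[e_1,e_3]=\phi e_1$: a direct bracket computation gives $[\phi e_1,\phi e_3]=r(ae_1+be_2)$, forcing $r=1$, and $[\phi e_2,\phi e_3]=\phi e_2$ yields the same. Since $[\phi e_1,\phi e_2]=0$ holds automatically, no further conditions appear, and $Aut(\g)$ consists exactly of the maps $e_1\mapsto ae_1+be_2$, $e_2\mapsto ce_1+de_2$, $e_3\mapsto pe_1+qe_2+e_3$ with $\left[\begin{smallmatrix}a&c\\b&d\end{smallmatrix}\right]\in GL(2,\mathbb{R})$ and $p,q$ arbitrary, i.e. $Aut(\g)\cong GL(2,\mathbb{R})\ltimes\mathbb{R}^2$.

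The orbit analysis then follows at once. Because $r=1$ is forced, every automorphism fixes the $e_3$-coefficient of a vector; thus for $\alpha e_1+\beta e_2+\gamma e_3$ the number $\gamma$ is an invariant, and since quasi-invariance is unaffected by rescaling ($\mu v$ is quasi-invariant exactly when $v$ is), only whether $\gamma$ vanishes can matter. If $\gamma=0$ the nonzero vector lies in $span\{e_1,e_2\}$, on whose nonzero elements $GL(2,\mathbb{R})$ acts transitively, so it is equivalent to $e_1$. If $\gamma\neq0$ I would rescale to $\gamma=1$ and then use the translation part, taking $a=d=1$, $b=c=0$, $p=-\alpha$, $q=-\beta$, which sends $\alpha e_1+\beta e_2+e_3$ to $e_3$. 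This covers every vector.

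The computation is routine throughout, so I do not expect a serious obstacle; the one step that genuinely carries the argument is establishing $r=1$, as this is what turns the $e_3$-coefficient into an automorphism invariant and so separates the two orbits. The only point requiring care is the observation, implicit in the paper's focus on quasi-invariant \emph{directions}, that the relevant equivalence may absorb a nonzero rescaling, without which $e_3$ and its multiples would lie in distinct orbits.
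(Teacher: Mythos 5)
Your argument is correct and complete. The paper deliberately omits a proof here (the orbit descriptions are ``given without proof''), so there is nothing to compare against, but your route---identifying $span\{e_1,e_2\}=[\g,\g]$ as a characteristic ideal, computing $Aut(\g_4^1)\cong GL(2,\mathbb{R})\ltimes\mathbb{R}^2$ from the bracket relations (with the forced $r=1$), and reading off the orbits---is exactly the verification the authors must have in mind. You are also right to flag the one genuine subtlety: since $r=1$ is forced, the $e_3$-coefficient is a strict invariant and the literal orbits separate $\gamma e_3$ for distinct $\gamma\neq 0$, so ``equivalent to $e_3$'' must be read up to nonzero rescaling; this is harmless for the paper's purposes because quasi-invariance is a property of directions, and your proof makes that explicit where the paper leaves it implicit.
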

It is again easy to see that some vector equivalent to $e_3$ must be quasi invariant, since the first equation prohibits any vector equivalent $e_1$ from being one. We note that unlike in the previous cases $e_3$ is not fixed by the automorphisms of $\g_4^1$, so we pick an appropriate vector and extend it to a standard basis, thus assuming without loss of generality that $P=Q=0$, and $R=\lambda$. The equations now read
$$
\begin{cases}
0=1+X^2+AY\\
0=Y(X+B)\\
0=XZ+YC+\lambda Z\\
[J^*e_1,J^*e_3]=(X+\lambda)J^*e_1+YJ^*e_2\\
0=A(X+B)\\
0=1+B^2+AY\\
0=AZ+BC+\lambda C\\
[J^*e_2,J^*e_3]=AJ^*e_1+(B+\lambda)J^*e_2\\
[J^*e_1,J^*e_2]=C J^*e_1-ZJ^*e_2
\end{cases}
$$
and because neither $A$ nor $Y$ can be zero and thus $X=-B$, we have
$$
\begin{cases}
0=1+X^2+AY\\
C=\frac{(X+\lambda) Z}{-Y}\\
[J^*e_1,J^*e_3]=(X+\lambda)J^*e_1+YJ^*e_2\\
Z=\frac{(-X+\lambda) C}{-A}\\
[J^*e_2,J^*e_3]=AJ^*e_1+(-X+\lambda)J^*e_2\\
[J^*e_1,J^*e_2]=C J^*e_1-ZJ^*e_2
\end{cases}
$$
The two equations on $C$ and $Z$ give together
$$
C=\frac{(X+\lambda) Z}{-Y}=\frac{(X+\lambda)}{-Y}\frac{(-X+\lambda) C}{-A}=C\frac{\lambda^2-X^2}{YA}
$$
which, by the top equation, amounts to $C(\lambda^2+1)=0$ and thus $C=Z=0$.

Suppose for a moment that $J^*\g_4^1$ is 3-dimensional. The last equation would then tell us that $J^*e_1$ and $J^*e_2$ lie in $span\{e_1^*,e_2^*\}$ and so the two other brackets, $[J^*e_1,J^*e_3]$ and $[J^*e_2,J^*e_3]$ must be not only non-zero, but also proportional to $J^*e_1$ and $J^*e_2$, respectively (since every vector in $span\{e_1^*,e_2^*\}$ is an eigenvector for $[e_3,\cdot]$). But this implies $A=Y=0$, contrary to the equations above.

Thus $J^*\g_4^1$ is one-dimensional and generated by $J^* e_3$. However observe that if $J^*e_1=\kappa J^*e_3$ and $J^*e_2=\tau J^*e_3$, then
\begin{align*}
-e_1 =\J(Xe_1 + Ye_2 + \kappa J^*e_3)=&  X^2e_1 +XYe_2 +X\kappa J^*e_3\\
&+ AYe_1 -XYe_2 + Y\tau J^*e_3\\
&+(-1-\lambda^2)\kappa e_3-\lambda\kappa J^*e_3
\end{align*}
and we must have $\kappa=0$, for the sake of $(-1-\lambda^2)\kappa e_3$ being 0. The same is true for $\tau$.

This is summarised in the following

\begin{proposition}
Each integrable complex structure $\J$ is (in some standard basis) of the form
$$
\left[
\begin{array}{cccccc}
X & \frac{-1-X^2}{Y} & 0 & 0 & 0 & 0\\
Y & -X & 0 & 0 & 0 & 0\\
0 & 0 & \lambda & 0 & 0 & -1-\lambda^2\\
0 & 0 & 0 & X^* & \frac{-1-(X^*)^2}{Y^*} & 0\\
0 & 0 & 0 & Y^* & -X^* & 0\\
0 & 0 & 1 & 0 & 0 & -\lambda
\end{array}
\right]
$$
for some numbers $\lambda$, $X$, $X^*$, and non-zero $Y$ and $Y^*$. Any such choice of numbers provides an integrable complex structure.
\end{proposition}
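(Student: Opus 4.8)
The statement has two halves: that every integrable $\J$ is carried into the displayed matrix by a suitable choice of standard bases, and that every matrix of that shape is genuinely an integrable complex structure. For the first half the essential work is already done in the computation preceding the proposition, and the plan is to assemble it and then exploit the symmetry between the two factors. The analysis above, after choosing a standard basis in which $e_3$ is quasi-invariant, has already produced $Z=C=0$, $B=-X$, $J^*e_1=J^*e_2=0$ and $A=\frac{-1-X^2}{Y}$ with $Y\neq 0$. Writing $e_3^*:=J^*e_3$, this says precisely that the canonical $2$-dimensional subspace $[\g_4^1,\g_4^1]=\mathrm{span}\{e_1,e_2\}$ (intrinsic, since the automorphism proposition forces every automorphism to preserve it) is $\J$-invariant with traceless restriction, while $\mathrm{span}\{e_3,e_3^*\}$ is $\J$-invariant by the remark giving $\J e_3^*=\J J^*e_3=-(1+\lambda^2)e_3-\lambda e_3^*$.

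I would then re-run the identical argument with the roles of $\g$ and $\g^*$ interchanged. This is legitimate because every ingredient used — the bracket relations, $\J^2=-\mathrm{id}$, the vanishing of $N_{\J}$, and the structure of the automorphism group — is symmetric under the swap $\g\leftrightarrow\g^*$, and because the already-fixed vector $e_3^*$ is itself quasi-invariant for the starred factor (its $\J$-image has $\g^*$-component $-\lambda e_3^*$), hence an admissible choice for the starred analysis. The starred version then yields that $[\g^*,\g^*]=\mathrm{span}\{e_1^*,e_2^*\}$ is $\J$-invariant with traceless restriction and that $\J e_1^*,\J e_2^*$ carry no $\g$- or $e_3^*$-component. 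Since $[\g_4^1,\g_4^1]\oplus\mathrm{span}\{e_3,e_3^*\}\oplus[\g^*,\g^*]$ exhausts $\g\times\g$, the operator $\J$ is block-diagonal for this splitting, each block being traceless and squaring to $-\mathrm{id}$; read off in the standard bases of the two factors this is exactly the displayed matrix, the surviving off-diagonal entries $A=\frac{-1-X^2}{Y}$ and $A^*=\frac{-1-(X^*)^2}{Y^*}$ being forced by $\det=1$ on a traceless $2\times 2$ block (and $Y,Y^*\neq 0$ automatically, since a real eigenvector would contradict $\J^2=-\mathrm{id}$).

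For the converse I would verify the two defining properties directly. That $\J^2=-\mathrm{id}$ is immediate on squaring each $2\times 2$ block: the $(e_1,e_2)$-block gives $X^2+AY=-1$ precisely because $A=\frac{-1-X^2}{Y}$, and likewise for the other two blocks. For integrability I would avoid computing all brackets and instead use the general tensorial identities $N_{\J}(\J v,w)=-\J N_{\J}(v,w)$ and $N_{\J}(\J v,\J w)=-N_{\J}(v,w)$ recorded in the remarks above. Because $\{e_1,\J e_1\}$, $\{e_3,\J e_3\}$ and $\{e_1^*,\J e_1^*\}$ are bases of the three invariant planes, these identities reduce $N_{\J}\equiv 0$ to the vanishing of $N_{\J}(e_1,e_3)$, $N_{\J}(e_1,e_1^*)$ and $N_{\J}(e_3,e_1^*)$. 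Each is a one-line check: $N_{\J}(e_1,e_1^*)=0$ since all brackets between $\g$ and $\g^*$ vanish, while the other two collapse because the relevant vectors are eigenvectors of $\mathrm{ad}_{e_3}$, respectively $\mathrm{ad}_{e_3^*}$, with eigenvalue $1$.

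The step I expect to demand the most care is the symmetry reduction in the first half: one must confirm that fixing $e_3^*=J^*e_3$ does not obstruct re-running the argument on the starred factor, i.e. that $e_3^*$ is a genuinely admissible quasi-invariant vector there and that the three invariant $2$-planes are independent and spanning, so that block-diagonality — rather than a merely block-triangular shape — really follows. Once that is in place, the remaining entries are dictated by tracelessness and $\det=1$, and everything else is bookkeeping.
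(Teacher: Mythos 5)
Your overall route --- reuse the unstarred computation, handle the starred factor by the $\g\leftrightarrow\g^*$ symmetry, and verify the converse via the tensorial identities for $N_{\J}$ --- is the same as the paper's, and your converse argument (reduction to $N_{\J}(e_1,e_3)$, $N_{\J}(e_1,e_1^*)$, $N_{\J}(e_3,e_1^*)$ together with the eigenvector observation) is a correct filling-in of a step the paper only asserts. You also correctly single out the delicate point of the first half; unfortunately it is exactly there that a genuine gap sits, one which the paper's own summary glosses over in the same way.

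The problem is the declaration $e_3^*:=J^*e_3$ followed by ``read off in the standard bases of the two factors this is exactly the displayed matrix.'' For the displayed matrix one needs $J^*e_3$ to be the \emph{third vector of a standard basis} of $\g^*$, i.e.\ to lie in the affine plane $e_3^*+\mathrm{span}\{e_1^*,e_2^*\}$. But the relation $[f_1,f_3]=f_1$ forces $\mathrm{ad}_{f_3}$ to act as the identity on the derived algebra, so every automorphism of $\g_4^1$ fixes the $e_3$-coefficient; hence the scalar $\eta$ defined by $J^*e_3\equiv\eta\, e_3^* \pmod{\mathrm{span}\{e_1^*,e_2^*\}}$ is an invariant of $\J$ that cannot be normalized away --- unlike in algebras (2) and (3), where the center, respectively the derived algebra, can be rescaled. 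Integrability does not force $\eta=1$: the block-diagonal structure with $\J e_1=e_2$, $\J e_1^*=e_2^*$, $\J e_3=2e_3^*$, $\J e_3^*=-\tfrac12 e_3$ has vanishing Nijenhuis tensor on all fifteen pairs of basis vectors, yet comparing $e_3$- and $e_3^*$-coefficients shows it is not of the displayed form in any standard basis ($\eta=2$). So either this normalization must be justified --- which the automorphism group does not permit here --- or the conclusion must carry an extra nonzero parameter $\eta$ in the $(6,3)$ entry and $\frac{-1-\lambda^2}{\eta}$ in the $(3,6)$ entry, exactly as in the quoted results of Magnin for algebras (7) and (8). Everything else in your first half (uniqueness and transversality of the three invariant planes, tracelessness and determinant $1$ pinning down the $2\times2$ blocks, $Y,Y^*\neq0$) is correct.
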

\begin{remark}
Note that this time we did not change the base after choosing the quasi-invariant vector $e_3$.
\end{remark}
\begin{remark}
Observe that every integrable complex structure must have the same invariant subspaces in $\g_4^1$ and ${\g_4^1}^*$, namely $span\{e_1,e_2\}$ and $span\{e_1^*,e_2^*\}$, respectively. Thus there can be no two distinct quasi-invariant vectors in $\g_{4}^{1}$.
\end{remark}

\section{Algebra (6) with $\theta> 0$}

The algebra is given by a multiplication table $[e_1,e_3]= \theta e_1-e_2$, $[e_2,e_3]= e_1+\theta e_2$, $[e_1,e_2]= 0$. Suppose that $\J$ is an integrable complex structure and
\begin{align*}
\J e_1 &=  Xe_1 + Ye_2 + Ze_3 + J^*e_1 \\
\J e_2 &=  Ae_1 + Be_2 + Ce_3 + J^*e_2 \\
\J e_3 &=  Pe_1 + Qe_2 + Re_3 + J^*e_3
\end{align*}
Then the Nijenhuis brackets are
\begin{align*}
N_{\J}(e_1,e_3) &=  [e_1,e_3] + \J[Je_1,e_3]+\J[e_1,Je_3] - [\J e_1,\J e_3] \\
&=\theta e_1-e_2+X\J (\theta e_1-e_2)+Y\J (e_1+\theta e_2)+R\J(\theta e_1-e_2)\\
&-XR(\theta e_1-e_2)-YR(e_1+\theta e_2)+ZP(\theta e_1-e_2)+ZQ(e_1+\theta e_2)-[J^*e_1,J^*e_3]\\
&=(\theta-XR\theta-YR+ZP\theta+ZQ)e_1+(-1+XR-YR\theta-ZP+ZQ\theta)e_2\\
&+(X\theta+Y+R\theta)\J e_1 + (-X+Y\theta - R)\J e_2 -[J^*e_1,J^*e_3]\\
N_{\J}(e_2,e_3) &=  [e_2,e_3] + \J[Je_2,e_3]+\J[e_2,Je_3] - [\J e_2,\J e_3] \\
&=e_1+\theta e_2+A\J(\theta e_1-e_2)+B\J(e_1+\theta e_2)+R\J(e_1+\theta e_2)\\
&-AR(\theta e_1-e_2)-BR(e_1+\theta e_2)+CP(\theta e_1-e_2)+CQ(e_1+\theta e_2)-[J^*e_2,J^*e_3]\\
&=(1-AR\theta-BR+CP\theta +CQ)e_1+(\theta+AR-BR\theta-CP+CQ\theta)e_2\\
&+ (A\theta+B+R)\J e_1+(-A+B\theta+R\theta)\J e_2 -[J^*e_2,J^*e_3]\\
N_{\J}(e_1,e_2) &=  [e_1,e_2] + \J[Je_1,e_2]+\J[e_1,Je_2] - [\J e_1,\J e_2] \\
&=0-Z\J(e_1+ \theta e_2)+C\J(\theta e_1- e_2)\\
&-XC(\theta e_1-e_2)-YC(e_1+\theta e_2)+AZ(\theta e_1-e_2)+BZ(e_1+\theta e_2)-[J^*e_1,J^*e_2]\\
&=(-Z+C\theta)\J e_1 +(-Z\theta -C)\J e_2\\
&+(-XC\theta - YC+AZ\theta+BZ)e_1+(XC-YC\theta-AZ+BZ\theta)e_2-[J^*e_1,J^*e_2]
\end{align*}
and together, after expanding all $\J$'s leave us with
%$$
%\begin{cases}
%(X\theta + Y)\J e_1+(-X+Y\theta -R)\J e_2+(\theta-YR+ZP\theta+ZQ)e_1+(-1+XR-ZP+ZQ\theta)e_2+Z\theta R e_3 -[J^*e_1,J^*e_3]\\
%(A\theta+B+R)\J e_1+ (-A+B\theta)\J e_2+(1-BR+CP\theta +CQ)e_1+(\theta+AR-CP+CQ\theta)e_2 %+CR\theta e_3 -[J^*e_2,J^*e_3]\\
%-Z\J e_1 -C\J e_2 +(- YC+BZ)e_1+(XC-AZ)e_2-[J^*e_1,J^*e_2]
%\end{cases}
%$$
\begin{align*}
(X\theta + Y)(Xe_1 + Ye_2 + Ze_3 + J^*e_1)+(-X+Y\theta -R)(Ae_1 + Be_2 + Ce_3 + J^*e_2)&\\
\qquad+(\theta-YR+ZP\theta+ZQ)e_1+(-1+XR-ZP+ZQ\theta)e_2+Z\theta R e_3 -[J^*e_1,J^*e_3]=&0\\
(A\theta+B+R)(Xe_1 + Ye_2 + Ze_3 + J^*e_1)+ (-A+B\theta)(Ae_1 + Be_2 + Ce_3 + J^*e_2)&\\
\qquad+(1-BR+CP\theta +CQ)e_1+(\theta+AR-CP+CQ\theta)e_2 +CR\theta e_3 -[J^*e_2,J^*e_3]=&0\\
-Z(Xe_1 + Ye_2 + Ze_3 + J^*e_1) -C(Ae_1 + Be_2 + Ce_3 + J^*e_2)&\\
\qquad+(- YC+BZ)e_1+(XC-AZ)e_2-[J^*e_1,J^*e_2]=&0
\end{align*}
Note that the $e_3$-part of the last equation is $-Z^2-C^2$, and so implies that both $Z$ and $C$ are zero. We rewrite all the equations using this information.
%$$
%\begin{cases}
%(X\theta + Y)X+(-X+Y\theta -R)A+(\theta-YR+ZP\theta+ZQ)=&0\\
%(X\theta + Y)Y+(-X+Y\theta -R)B+(-1+XR-ZP+ZQ\theta)=&0\\
%(X\theta + Y)Z+(-X+Y\theta -R)C+Z\theta R=&0\\
%(X\theta + Y)J^*e_1+(-X+Y\theta -R)J^*e_2=&[J^*e_1,J^*e_3]\\
%(A\theta+B+R)X+ (-A+B\theta)A+(1-BR+CP\theta +CQ)=&0\\
%(A\theta+B+R)Y+ (-A+B\theta)B+(\theta+AR-CP+CQ\theta)=&0\\
%(A\theta+B+R)Z+ (-A+B\theta)C+CR\theta=&0\\
%(A\theta+B+R)J^*e_1+ (-A+B\theta)J^*e_2=&[J^*e_2,J^*e_3]\\
%-ZX -CA+(- YC+BZ)=&0\\
%-ZY-CB+(XC-AZ)=&0\\
%-Z^2 -C^2=&0\\
%-ZJ^*e_1 -CJ^*e_2=&[J^*e_1,J^*e_2]
%\end{cases}
%$$
%Note that the $e_3$-part of the last equation is $-Z^2-C^2$, and so implies that both $Z$ and $C$ are zero. We rewrite all the equations using this information.
$$
\begin{cases}
0=(X\theta + Y)X+(-X+Y\theta -R)A+\theta-YR\\

0=(X\theta + Y)Y+(-X+Y\theta -R)B-1+XR\\

[J^*e_1,J^*e_3]=(X\theta + Y)J^*e_1+(-X+Y\theta -R)J^*e_2\\

0=(A\theta+B+R)X+ (-A+B\theta)A+1-BR\\

0=(A\theta+B+R)Y+ (-A+B\theta)B+\theta+AR\\

[J^*e_2,J^*e_3]=(A\theta+B+R)J^*e_1+ (-A+B\theta)J^*e_2\\

[J^*e_1,J^*e_2]=0
\end{cases}
$$
The possible candidates for a quasi-invariant vector are of two types.
\begin{proposition}
Each automorphism of algebra (6) must leave the subspace $span\{e_1,e_2\}$ invariant. Each vector is equivalent to either $e_1$ or $e_3$ under some automorphism. 
\end{proposition}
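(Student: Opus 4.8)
The plan is to identify $span\{e_1,e_2\}$ with the derived algebra and then to determine $Aut(\g_{6}^{\theta})$ explicitly enough to read off its orbits on vectors. For the first assertion, note that the generators of the commutator ideal are $[e_1,e_3]=\theta e_1-e_2$ and $[e_2,e_3]=e_1+\theta e_2$, whose coordinate matrix $\left[\begin{smallmatrix}\theta & 1\\ -1 & \theta\end{smallmatrix}\right]$ has determinant $\theta^2+1\neq 0$; hence $[\g_{6}^{\theta},\g_{6}^{\theta}]=span\{e_1,e_2\}$. Since every Lie algebra automorphism preserves the derived algebra, each $\phi\in Aut(\g_{6}^{\theta})$ leaves $span\{e_1,e_2\}$ invariant, which is the first statement.

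For the orbit description I would write $D=span\{e_1,e_2\}$ and decompose an arbitrary $\phi\in Aut(\g_{6}^{\theta})$ as $M:=\phi|_{D}\in GL(D)$ together with $\phi(e_3)=\mu e_3+w$, where $\mu\neq 0$ and $w\in D$. Evaluating the automorphism identity on $[e_3,v]$ for $v\in D$, and using that $D$ is abelian so that $w$ contributes nothing, yields the intertwining relation $M\,\mathrm{ad}_{e_3}=\mu\,\mathrm{ad}_{e_3}\,M$ on $D$; equivalently $M\,\mathrm{ad}_{e_3}\,M^{-1}=\mu\,\mathrm{ad}_{e_3}$, so $\mathrm{ad}_{e_3}$ and $\mu\,\mathrm{ad}_{e_3}$ are conjugate. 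The matrix of $\mathrm{ad}_{e_3}$ on $D$ is $\left[\begin{smallmatrix}-\theta & -1\\ 1 & -\theta\end{smallmatrix}\right]$, with eigenvalues $-\theta\pm i$; comparing real parts of the eigenvalues of $\mathrm{ad}_{e_3}$ and of $\mu\,\mathrm{ad}_{e_3}$ forces $-\theta=-\mu\theta$, and since $\theta>0$ this gives $\mu=1$ and $M\,\mathrm{ad}_{e_3}=\mathrm{ad}_{e_3}\,M$. I expect this to be the crux of the argument: it is precisely the non-reality of the eigenvalues (guaranteed by $\theta>0$) that forbids rescaling the $e_3$-direction, and it is what separates this case from, say, algebra (4).

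It then remains to exploit the commutation $M\,\mathrm{ad}_{e_3}=\mathrm{ad}_{e_3}\,M$. Writing $\mathrm{ad}_{e_3}=-\theta\,\mathrm{id}+J_0$ with $J_0=\left[\begin{smallmatrix}0 & -1\\ 1 & 0\end{smallmatrix}\right]$, commuting with $\mathrm{ad}_{e_3}$ is the same as commuting with $J_0$, so $M$ runs exactly over the invertible matrices $a\,\mathrm{id}+bJ_0$, that is, over multiplication by nonzero complex numbers under the identification $D\cong\mathbb{C}$ sending $e_1\mapsto 1$ and $e_2\mapsto i$. This action is transitive on $D\setminus\{0\}$, so any nonzero vector of $D$ is carried to $e_1$. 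A direct check confirms conversely that every pair $(M,w)$ of this form defines a genuine automorphism, so $w\in D$ is a free parameter; thus, given $v=d+\gamma e_3$ with $d\in D$ and $\gamma\neq 0$, the automorphism with $M=\mathrm{id}$ and $w=-\gamma^{-1}d$ sends $v$ to $\gamma e_3$. Since a nonzero scalar multiple of a quasi-invariant vector is again quasi-invariant with the same eigenvalue, $v$ may be treated as equivalent to $e_3$. The two cases exhaust $\g_{6}^{\theta}\setminus\{0\}$ and establish the claim.
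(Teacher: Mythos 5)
Your proof is correct. The paper states this proposition without proof (the orbit descriptions are explicitly ``given without proof''), so there is no argument to compare against; your derivation supplies the missing justification and it is sound: $span\{e_1,e_2\}$ is the derived algebra, the eigenvalue comparison $-\theta=-\mu\theta$ correctly forces $\mu=1$ (using $\theta>0$), the commutant of $J_0$ is exactly $\mathbb{C}^*$ acting transitively on $D\setminus\{0\}$, and the translation part $w$ is indeed a free parameter. One point worth making explicit: because $\mu=1$ is forced, the actual orbits are $D\setminus\{0\}$ and the affine planes $\gamma e_3+D$ for each $\gamma\neq 0$, so the proposition's phrase ``equivalent to $e_3$'' is literally true only up to rescaling; you correctly flag this and note that quasi-invariance is a property of the spanned line, which is all the paper uses. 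Your observation that the non-real eigenvalues $-\theta\pm i$ are what rigidify the $e_3$-direction is also the right structural explanation for why this case differs from algebra (4).
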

\begin{proposition}
No vector equivalent to $e_1$ is quasi-invariant.
\end{proposition}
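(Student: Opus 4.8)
The plan is to argue by contradiction and to reduce the whole question to the scalar ($\g_6$-)parts of the Nijenhuis system displayed above. First I would use that quasi-invariance transfers along automorphisms: if $\phi\in Aut(\g_6)$ sends $e_1$ to a vector $v$ and $\J v=\lambda v+J^*v$, then $e_1$ is quasi-invariant for the integrable structure $\tilde\J:=(\phi\times\phi)^{-1}\J(\phi\times\phi)$, again written in the same standard basis. Since by the previous proposition every vector of $span\{e_1,e_2\}$ is equivalent to $e_1$, it is enough to show that $e_1$ itself cannot be quasi-invariant for any integrable $\J$. So I would assume $\J e_1=Xe_1+J^*e_1$, i.e. $Y=Z=0$, and recall that $Z=C=0$ holds unconditionally (the $e_3$-part of the last equation of the system is $-Z^2-C^2$).

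Next I would substitute $Y=Z=C=0$ into the displayed system and keep only three of the resulting identities: the $e_1$- and $e_2$-components of $N_\J(e_1,e_3)=0$ together with the $e_2$-component of $N_\J(e_2,e_3)=0$. These collapse to
$$(X+R)A=\theta(X^2+1),\qquad (X+R)B=XR-1,\qquad \theta(B^2+1)=A(B-R).$$
The remaining equations, and in particular all the bracket relations among the $J^*e_i$, will not be needed.

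The contradiction is then purely arithmetic. Since $\theta>0$ the first identity has a strictly positive right-hand side, so $X+R\neq 0$ and $A\neq 0$; writing $s=X+R$ I would solve $A=\theta(X^2+1)/s$ and $B=(XR-1)/s$, compute $B-R=-(1+R^2)/s$, and feed these into the third identity. After clearing the positive factor $\theta/s^2$ this gives
$$(XR-1)^2+(X+R)^2=-(X^2+1)(R^2+1),$$
whose left-hand side is a sum of squares, hence $\geq 0$, while the right-hand side is strictly negative — the desired impossibility, so $e_1$ cannot be quasi-invariant.

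I expect the only real obstacle to be psychological rather than computational: one must resist trying to exploit the bracket data for $J^*\g_6$ and instead notice that the whole obstruction lives in the real parts of the equations. Conceptually this sign clash is the shadow of the algebra's geometry, since $\mathrm{ad}_{e_3}$ acts on $span\{e_1,e_2\}$ with the non-real eigenvalues $-\theta\pm i$; the absence of any real $\mathrm{ad}_{e_3}$-invariant line in that plane is exactly what forces a sum of squares to be matched against a negative quantity.
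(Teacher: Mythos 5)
Your argument is correct and is essentially the paper's own proof: it isolates the same three scalar equations (the $e_1$- and $e_2$-parts of $N_{\J}(e_1,e_3)$ and the $e_2$-part of $N_{\J}(e_2,e_3)$), ignores the bracket relations among the $J^*e_i$, and derives the same contradiction via the identity $(XR-1)^2+(X+R)^2=(X^2+1)(R^2+1)$ — the paper merely eliminates $\theta$ and $\lambda$ instead of $A$ and $B$, arriving at $R-B=B-R$ rather than at a positive quantity equated with a negative one. Your preliminary remark that quasi-invariance transfers along $Aut(\g)\times Aut(\g)$-conjugation, so that it suffices to exclude $e_1$ itself, is a point the paper leaves implicit and is worth stating.
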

\begin{proof}
Assume to the contrary. Then $Y=Z=0$ and $X=\lambda$ and the equations become
%$$
%\begin{cases}
%\lambda\theta\J e_1+(-\lambda-R)\J e_2+(\theta)e_1+(-1+\lambda R)e_2-[J^*e_1,J^*e_3]=0\\
%(A\theta+B+R)\J e_1+ (-A+B\theta)\J e_2+(1-BR+CP\theta +CQ)e_1+(\theta+AR-CP+CQ\theta)e_2 %+CR\theta e_3 -[J^*e_2,J^*e_3]=0\\
%-C\J e_2 +(\lambda C)e_2-[J^*e_1,J^*e_2]=0
%\end{cases}
%$$
%or
%$$
%\begin{cases}
%\lambda^2\theta e_1+\lambda\theta J^*e_1+(-\lambda-R)(Ae_1 + Be_2 + Ce_3 + J^*e_2)+(\theta)e_1+(-1+\lambda R)e_2=[J^*e_1,J^*e_3]\\
%(A\theta+B+R)\lambda e_1+(A\theta+B+R)J^*e_1 + (-A+B\theta)(Ae_1 + Be_2 + Ce_3 + J^*e_2)+(1-BR+CP\theta +CQ)e_1+(\theta+AR-CP+CQ\theta)e_2 +CR\theta e_3 -[J^*e_2,J^*e_3]=0\\
%-C(Ae_1 + Be_2 + Ce_3 + J^*e_2) +(\lambda C)e_2-[J^*e_1,J^*e_2]=0
%\end{cases}
%$$
%or
$$
\begin{cases}
0=\theta(\lambda^2+1)+A(-\lambda-R)\\
0=B(-\lambda-R)-1+\lambda R\\
[J^*e_1,J^*e_3]=\lambda\theta J^*e_1+ (-\lambda-R)J^*e_2\\
0=(A\theta+B+R)\lambda+(-A+B\theta)A +1-BR\\
0=(-A+B\theta)B+\theta+AR\\
[J^*e_2,J^*e_3]=(A\theta+B+R)J^*e_1 + (-A+B\theta)J^*e_2\\
[J^*e_1,J^*e_2]=0
\end{cases}
$$
We can rewrite the first two and the fifth equation as
$$
\begin{cases}
\theta=\frac{A(\lambda+R)}{1+\lambda^2}\\
\lambda=\frac{1+BR}{R-B}\\
\theta=\frac{A(B-R)}{1+B^2}
\end{cases}
$$
where we divided by $R-B$ because $\theta\neq 0$ tells us that none of $A$, $\lambda+R$, or $B-R$ can be zero. We thus get
$$
\frac{A(\frac{1+BR}{R-B}+R)}{1+(\frac{1+BR}{R-B})^2}=\frac{A(B-R)}{1+B^2}
$$
that gives
%$$
%\frac{\frac{1+BR+R^2-BR}{R-B}}{\frac{R^2-2BR+B^2+1+2BR+B^2R^2}{(R-B)^2}}=\frac{B-R}{1+B^2}
%$$
$$
\frac{(1+R^2)(R-B)}{(1+R^2)(1+B^2)}=\frac{B-R}{1+B^2}
$$
or $R-B=B-R$, which is impossible.
\end{proof}

Therefore we can assume that $e_3$ is invariant (again, extending the quasi-invariant vector to a standard basis), or $P=Q=0$ and $R=\lambda$. The equations become
%$$
%\begin{cases}
%(X\theta + Y)\J e_1+(-X+Y\theta -\lambda)\J e_2+(\theta-Y\lambda)e_1+(-1+X\lambda)e_2+Z\theta \lambda e_3 -[J^*e_1,J^*e_3]\\
%(A\theta+B+\lambda)\J e_1+ (-A+B\theta)\J e_2+(1-B\lambda)e_1+(\theta+A\lambda)e_2 +C\lambda\theta e_3 -[J^*e_2,J^*e_3]\\
%-Z\J e_1 -C\J e_2 +(- YC+BZ)e_1+(XC-AZ)e_2-[J^*e_1,J^*e_2]
%\end{cases}
%$$
%and expand to
%$$
%\begin{cases}
%((X\theta + Y)X +(-X+Y\theta -\lambda)A+(\theta-Y\lambda))e_1=0\\
%(A\theta+B+\lambda)X+ (-A+B\theta)A+(1-B\lambda)e_1=0\\
%-ZX -CA +(- YC+BZ)e_1=0\\
%((X\theta + Y)Y+(-X+Y\theta -\lambda)B+(-1+X\lambda))e_2=0\\
%(A\theta+B+\lambda)Y+ (-A+B\theta)B+(\theta+A\lambda)e_2 =0\\
%-ZY -CB +(XC-AZ)e_2=0\\
%((X\theta + Y)Z+(-X+Y\theta -\lambda)C+Z\theta \lambda) e_3 =0\\
%((A\theta+B+\lambda)Z+ (-A+B\theta)C+C\lambda\theta) e_3=0\\
%(-Z^2 -C^2)e_3=0\\
%(X\theta + Y)J^* e_1+(-X+Y\theta -\lambda)J^* e_2=[J^*e_1,J^*e_3]\\
%(A\theta+B+\lambda)J^* e_1+ (-A+B\theta)J^* e_2=[J^*e_2,J^*e_3]\\
%-ZJ^* e_1 -CJ^* e_2 =[J^*e_1,J^*e_2]
%\end{cases}
%$$
%and we quickly see that $Z=C=0$, so
$$
\begin{cases}
0=(X\theta + Y)X+(-X+Y\theta -\lambda)A+\theta-Y\lambda\\

0=(X\theta + Y)Y+(-X+Y\theta -\lambda)B-1+X\lambda=&0\\

[J^*e_1,J^*e_3]=(X\theta + Y)J^*e_1+(-X+Y\theta -\lambda)J^*e_2\\

0=(A\theta+B+\lambda)X+ (-A+B\theta)A+1-B\lambda=&0\\

0=(A\theta+B+\lambda)Y+ (-A+B\theta)B+\theta+A\lambda=&0\\

[J^*e_2,J^*e_3]=(A\theta+B+\lambda)J^*e_1+ (-A+B\theta)J^*e_2\\

[J^*e_1,J^*e_2]=0
\end{cases}
$$
As in the case of $\g_{4}^1$, we can see that $Z=C=0$ implies that if $J^*\g_{6}^{\theta}$ is 1-dimensional, then $J^*e_1=J^*e_2=0$. We will now show that this is almost always the case.

Suppose that $J^*\g_{6}^{\theta}$ is 3-dimensional. Then the equations above allow for a computation of the characteristic polynomial for $[\cdot,J^*e_3]$
\begin{align*}
-t\left((X\theta+Y-t)\right.&\left.(-A+B\theta-t)-(-X+Y\theta-\lambda)(A\theta+B+\lambda)\right)=\\
&-t\left(t^2+t(A-B\theta-X\theta-Y)+(X\theta+Y)(-A+B\theta)\right.\\
&\left.-(-X+Y\theta-\lambda)(A\theta+B+\lambda)\right)
\end{align*}
and we know the roots of the quadratic function: $\theta\pm i$ (easily computed in the standard basis). This gives us two additional (Vieta's) equations
$$
\begin{cases}
A-B\theta-X\theta-Y=-2\theta\\
4\theta^2-4(X\theta+Y)(-A+B\theta)+4(-X+Y\theta-\lambda)(A\theta+B+\lambda)=-4
\end{cases}
$$
which we rewrite along all the others
\begin{numcases}{}
A-Y=\theta(X+B-2) \\
\theta^2(1-XB+AY)=-1+XB-AY+\theta\lambda (A-Y)+\lambda^2\\
\theta(1+X^2+AY)=X(A-Y)+\lambda(A+Y)\\
\theta A(X+B)=-1+A^2-BX+\lambda(B-X)\\
\theta Y(X+B)=1-Y^2+BX+\lambda(B-X)\\
\theta(1+B^2+AY)=B(A-Y)-\lambda(A+Y)
\end{numcases}
%$$
%\begin{cases}
%A-Y=\theta(X+B-2) \\
%\theta^2(1-XB+AY)=-1+XB-AY+\theta\lambda (A-Y)+\lambda^2\\
%\theta(1+X^2+AY)=X(A-Y)+\lambda(A+Y)\\
%\theta A(X+B)=-1+A^2-BX+\lambda(B-X)\\
%\theta Y(X+B)=1-Y^2+BX+\lambda(B-X)\\
%\theta(1+B^2+AY)=B(A-Y)-\lambda(A+Y)
%\end{cases}
%$$
Equations (3)-(6) and (4)+(5) give, after substituting (1)
$$
\begin{cases}
\theta(X-B)(X+B)=(X-B)\theta(X+B-2)+2\lambda(A+Y)\\
\theta(A+Y)(X+B)=\theta(X+B-2)(A+Y)+2\lambda(B-X)
\end{cases}
$$
or
$$
\begin{cases}
\theta(X-B)-\lambda(A+Y)=0\\
\lambda(X-B)+\theta(A+Y)=0
\end{cases}
$$
of which the determinant $\theta^2+\lambda^2$ is non-zero, and hence $X-B=0$ and $A+Y=0$. This leaves us with
$$
\begin{cases}
Y=\theta(1-X)\\
\theta(1+X^2-Y^2)=-2XY\\
2\theta XY=1+X^2-Y^2\\
\theta^2(1-X^2-Y^2)=-1+X^2+Y^2-2\theta\lambda Y+\lambda^2+2\lambda X
\end{cases}
$$
The first equation tells us that $Y$ is non-zero, and the following two give $2\theta^2 XY=-2XY$, which means that $X=0$. We are left with
$$
\begin{cases}
Y=\theta\\
1-Y^2=0\\
\lambda(\lambda-2)=0
\end{cases}
$$
We summarise the discussion in the following.

\begin{proposition}
Each integrable complex structure $\J$ on $\g_{6}^{\theta}\times\g_{6}^{\theta}$ is (in some standard basis) of one of the four distinct forms:
\begin{enumerate}
\item If $J^*\g_{6}^{\theta}$ and $J{\g_{6}^{\theta}}^*$ are 1-dimensional, then $\J$ is
$$
\left[
\begin{array}{cccccc}
X & \frac{-1-X^2}{Y} & 0 & 0 & 0 & 0\\
Y & -X & 0 & 0 & 0 & 0\\
0 & 0 & \lambda & 0 & 0 & -1-\lambda^2\\
0 & 0 & 0 & X^* & \frac{-1-(X^*)^2}{Y^*} & 0\\
0 & 0 & 0 & Y^* & -X^* & 0\\
0 & 0 & 1 & 0 & 0 & -\lambda
\end{array}
\right]
$$
for some numbers $\lambda$, $X$, $X^*$, and non-zero $Y$ and $Y^*$. Any such choice of numbers provides an integrable complex structure. This can occur for any $\theta$.
\item If $\theta=1$, $J^*\g_{6}^{\theta}$ or $J{\g_{6}^{\theta}}^*$ can be 3-dimensional, and we then have three possibilities:
\begin{itemize}
\item If $\lambda$ is $0$, then $\J$ is
$$
\left[
\begin{array}{cccccc}
0 & -1 & 0 & 0 & 0 & 0\\
1 & 0 & 0 & 0 & 0 & 0\\
0 & 0 & 0 & 0 & 0 & -1\\
0 & 1 & 0 & 0 & -1 & 0\\
1 & 0 & 0 & 1 & 0 & 0\\
0 & 0 & 1 & 0 & 0 & 0
\end{array}\right]\text{\quad or \quad}
\left[\begin{array}{cccccc}
0 & -1 & 0 & 0 & 1 & 0\\
1 & 0 & 0 & 1 & 0 & 0\\
0 & 0 & 0 & 0 & 0 & -1\\
0 & 0 & 0 & 0 & -1 & 0\\
0 & 0 & 0 & 1 & 0 & 0\\
0 & 0 & 1 & 0 & 0 & 0
\end{array}
\right]
$$
Note that $J^*e_1=e_2^*$ (or, in the second case, $Je_1^*=e_2$), forced by the Nijenhuis tensor and equations on the brackets in $J^*\g_{6}^{\theta}$).
\item If $\lambda$ is $2$, then $\J$ is
$$
\left[
\begin{array}{cccccc}
0 & -1 & 0 & 0 & 0 & 0\\
1 & 0 & 0 & 0 & 0 & 0\\
0 & 0 & 2 & 0 & 0 & -5\\
1 & 0 & 0 & 0 & 1 & 0\\
0 & 1 & 0 & -1 & 0 & 0\\
0 & 0 & 1 & 0 & 0 & -2
\end{array}
\right]
$$
Note that this time $J^*e_1=e_1^*$.
\item If $\lambda$ is $-2$, then $\J$ is
$$
\left[
\begin{array}{cccccc}
0 & 1 & 0 & 1 & 0 & 0\\
-1 & 0 & 0 & 0 & 1 & 0\\
0 & 0 & -2 & 0 & 0 & 1\\
0 & 0 & 0 & 0 & -1 & 0\\
0 & 0 & 0 & 1 & 0 & 0\\
0 & 0 & -5 & 0 & 0 & 2
\end{array}
\right]
$$
\end{itemize}
\end{enumerate}
\end{proposition}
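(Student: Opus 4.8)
The plan is to assemble the four normal forms from the reduction already established above. We may take $e_3$ to be quasi-invariant, so that $P=Q=0$, $R=\lambda$, and the $e_3$-part of the last Nijenhuis equation has forced $Z=C=0$. Since each of $J^*\g_{6}^{\theta}$ and $J{\g_{6}^{\theta}}^{*}$ is $1$- or $3$-dimensional (to keep $\J\g\cap\g$ even-dimensional), I would organise the proof around the three surviving configurations: both images $1$-dimensional; $\dim J^*\g_{6}^{\theta}=3$; and $\dim J{\g_{6}^{\theta}}^{*}=3$.

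I would first treat the case where both images are $1$-dimensional. As already noted, $Z=C=0$ forces $J^*e_1=J^*e_2=0$, so $\J$ preserves the plane $\mathrm{span}\{e_1,e_2\}$ and restricts there to an endomorphism squaring to $-\mathrm{id}$ (whence $B=-X$ and $AY=-1-X^2$); the same holds symmetrically in the starred factor, while the $e_3,e_3^{*}$ pair yields the block $\left[\begin{smallmatrix}\lambda & -1-\lambda^2\\ 1 & -\lambda\end{smallmatrix}\right]$. It then remains to substitute $J^*e_1=J^*e_2=0$ into the four scalar Nijenhuis equations and solve for the admissible $(X,Y,\lambda)$. This is the one place where the argument genuinely departs from $\g_{3}$ and $\g_{4}^{1}$: since bracketing with $e_3$ acts on $\mathrm{span}\{e_1,e_2\}$ as a genuine rotation-scaling with non-real eigenvalues $\theta\pm i$, the restriction of $\J$ to the plane is strongly constrained, and I would check carefully which triples actually satisfy all four equations before recording form (1).

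For $\dim J^*\g_{6}^{\theta}=3$ the main work is the characteristic-polynomial and Vieta computation displayed above, which already forces $\theta=1$, $X=0$, $Y=1$, $A=-1$, $B=0$ and $\lambda\in\{0,2\}$. To exhibit $\J$ in a standard basis I would use that $u_i:=J^*e_i$ span $\g^{*}$ and, with these values, satisfy $[u_1,u_2]=0$, $[u_1,u_3]=u_1+(1-\lambda)u_2$, and $[u_2,u_3]=(\lambda-1)u_1+u_2$; recognising this as $\g_{6}^{1}$, I pick the adapted standard basis of $\g^{*}$ that normalises these relations (the identity labelling when $\lambda=2$, the swap $e_1^{*}=u_2,\ e_2^{*}=u_1$ when $\lambda=0$), and then fill the last three columns from $\J(u_i)=\J(\J e_i-Je_i)=-e_i-\J(Je_i)$, with integrability automatic because the three brackets $N_{\J}(e_i,e_j)$ already vanish. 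This produces the $\lambda=2$ matrix and the first displayed $\lambda=0$ matrix. The mirror configuration $\dim J{\g_{6}^{\theta}}^{*}=3$ I would not recompute but obtain from the previous one via the factor-swapping automorphism $S(u,v)=(v,u)$ of $\g\times\g$, which carries standard bases to standard bases, exchanges $J^*\g$ with $J\g^{*}$, and sends $\lambda$ to $-\lambda$ (as $\J(J^*e_3)=-(1+\lambda^2)e_3-\lambda J^*e_3$); after readjusting to an adapted standard basis this yields the $\lambda=-2$ matrix and the second displayed $\lambda=0$ matrix.

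The converse — that each displayed matrix is an integrable complex structure — is settled by verifying $\J^2=-\mathrm{id}$ together with the vanishing of $N_{\J}(e_1,e_3)$, $N_{\J}(e_2,e_3)$, $N_{\J}(e_1,e_2)$, which in the $3$-dimensional cases already guarantees $N_{\J}\equiv0$. I expect the main obstacle to be twofold: solving the residual scalar system correctly in the $1$-dimensional case, where the rotational bracket structure of $\g_{6}^{\theta}$ makes the admissible plane restrictions genuinely different from those for $\g_{3}$ and $\g_{4}^{1}$; and, in the $3$-dimensional case, fixing the standard basis of $\g^{*}$ adapted to $J^*e_1,J^*e_2,J^*e_3$ while keeping the two distinct $\lambda=0$ forms apart according to which factor carries the $3$-dimensional image.
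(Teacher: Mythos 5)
Your overall architecture is the paper's: reduce to $e_3$ quasi-invariant with $P=Q=0$, $R=\lambda$, $Z=C=0$, split on $\dim J^*\g_{6}^{\theta}$ and $\dim J{\g_{6}^{\theta}}^{*}$, run the characteristic-polynomial/Vieta argument in the 3-dimensional branch, and transport the result to the mirror branch by the factor swap (which indeed sends $\lambda$ to $-\lambda$). Your way of producing the explicit matrices — computing $[J^*e_1,J^*e_3]=J^*e_1+(1-\lambda)J^*e_2$, $[J^*e_2,J^*e_3]=(\lambda-1)J^*e_1+J^*e_2$ from the derived system and then renormalising the standard basis of $\g^*$ (identity labelling for $\lambda=2$, the swap $e_1^*=J^*e_2$, $e_2^*=J^*e_1$ for $\lambda=0$) — is correct and fills in steps the paper leaves implicit.

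The genuine gap is the one-dimensional branch, where you stop at ``I would check carefully which triples actually satisfy all four equations.'' That check is the entire content of case (1) and you must carry it out, because its outcome is not the family displayed in the statement. With $J^*e_1=J^*e_2=0$, $B=-X$ and $AY=-1-X^2$, the four scalar equations collapse to
\begin{equation*}
X(Y-A)=\lambda(A+Y),\qquad X^2+Y^2+2\lambda X=1,\qquad X^2+A^2-2\lambda X=1 ,
\end{equation*}
and multiplying the last two and comparing with $(AY)^2=(1+X^2)^2$ gives $X^2(1+\lambda^2)=0$, hence $X=0$, $Y=\pm1$, $A=\mp1$. So the restriction of $\J$ to $span\{e_1,e_2\}$ is forced to be $\pm\left[\begin{smallmatrix}0&-1\\1&0\end{smallmatrix}\right]$, exactly as in the Magnin normal forms for algebras (7) and (8); one checks directly (e.g.\ $\theta=1$, $X=Y=1$, $A=-2$, $\lambda=0$ gives $N_{\J}(e_1,e_3)=3e_1+e_2\neq0$) that a generic choice of $X$ and $Y$ in form (1) is \emph{not} integrable, and no automorphism rescues it, since $Aut(\g_6^{\theta})$ acts on $span\{e_1,e_2\}$ only through the centralizer of $ad_{e_3}$. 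Your instinct that the rotational eigenvalues $\theta\pm i$ make this case ``strongly constrained'' is exactly right, but executing your own plan yields a strictly smaller family than the two-parameter block claimed in form (1); you must either complete this computation and state the corrected normal form, or explain why the extra parameters survive — and the paper's own derivation (which transplants the $\g_4^1$ answer without solving these equations) does not supply such an explanation.
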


\section{Algebras (7) and (8) -- $\mathfrak{su}(3)$ and $\mathfrak{sl}(2,\mathbb{R})$}

For completeness, we include the results of Magnin:

\begin{proposition}{(\cite{mag1}, Corollary 3)}
Each integrable complex structure on $\mathfrak{su}(3)\times\mathfrak{su}(3)$ is (in some standard basis) of the form
$$
\left[
\begin{array}{cccccc}
0 & -1 & 0 & 0 & 0 & 0\\
1 & 0 & 0 & 0 & 0 & 0\\
0 & 0 & \lambda & 0 & 0 & \eta \\
0 & 0 & 0 & 0 & -1 & 0\\
0 & 0 & 0 & 1 & 0 & 0\\
0 & 0 & \frac{-1-\lambda^2}{\eta} & 0 & 0 & -\lambda
\end{array}
\right]
$$
for some choice of $\lambda$ and non-zero $\eta$.
\end{proposition}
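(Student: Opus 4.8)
The plan is to run the same three-step scheme used for the earlier algebras, now with the bracket relations $[e_1,e_2]=e_3$, $[e_2,e_3]=e_1$, $[e_3,e_1]=e_2$ of the compact algebra (8), $\mathfrak{su}(2)$. First I would write $\J e_i = Je_i + J^* e_i$ with undetermined coefficients and expand the three Nijenhuis brackets $N_\J(e_1,e_3)$, $N_\J(e_2,e_3)$, $N_\J(e_1,e_2)$; separating each into its $\g$- and $\g^*$-parts produces nine scalar identities together with the three intrinsic relations expressing $[J^*e_1,J^*e_3]$, $[J^*e_2,J^*e_3]$ and $[J^*e_1,J^*e_2]$ as combinations of the $J^*e_k$.

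The orbit analysis is especially clean here, since $Aut(\mathfrak{su}(2))=SO(3)$ acts transitively on directions: there is a single orbit and the whole problem is $SO(3)\times SO(3)$-equivariant. Because a quasi-invariant vector always exists, I would rotate the first factor to place it along $e_3$ (so $P=Q=0$, $R=\lambda$) and rotate the second factor to align $J^*e_3$ with $e_3^*$; this fixes $\J e_3 = \lambda e_3 + \mu e_3^*$ and, by the remark recalling $\J J^*e_3$, also $\J e_3^* = \frac{-1-\lambda^2}{\mu}e_3 - \lambda e_3^*$. The two ``$e_3$-component'' identities coming from $N_\J(e_1,e_3)$ and $N_\J(e_2,e_3)$ are then linear in $Z,C$ with determinant $1+\lambda^2\neq 0$, forcing $Z=C=0$ at once.

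In the one-dimensional case $J^*\g=\langle e_3^*\rangle$ the three bracket relations collapse (all $[J^*e_i,J^*e_j]$ vanish), and I expect the surviving $\g$-part identities to pin down the block of $\J$ on $\langle e_1,e_2\rangle$: eliminating $A$ and $Y$ among the four relations should reduce them to $X^2(1+\lambda^2)=0$, hence $X=0$ and a standard rotation block $\J e_1=\pm e_2$. Feeding this back into $\J^2=-\mathrm{id}$, and using the $e_3$-component (not merely the $e_3^*$-component), then forces $J^*e_1=J^*e_2=0$; the identical argument applied to the second factor gives the mirror statement on $\langle e_1^*,e_2^*\rangle$, and after a sign/scale normalization one reads off exactly the stated family with parameters $\lambda$ and $\eta=\frac{-1-\lambda^2}{\mu}$.

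The main obstacle is excluding the three-dimensional case, and this is where the special nature of $\mathfrak{su}(2)$ is decisive. If $\dim J^*\g=3$ then $\{J^*e_1,J^*e_2,J^*e_3\}$ is a basis of $\g^*$ and the three bracket relations exhibit $[\,\cdot\,,J^*e_3]$ as an explicit matrix on the plane $\langle J^*e_1,J^*e_2\rangle$. For $\mathfrak{su}(2)$ the adjoint operator of every nonzero vector has purely imaginary spectrum (the Killing form is definite), so this matrix must have zero trace and positive determinant; the vanishing trace gives $A=Y$, substitution into the first $\g$-part identity gives $\lambda Y=0$, and each branch is contradictory: the branch $\lambda=0$ forces the determinant to equal $-1$ although it must be the positive number $|J^*e_3|^2$, while the branch $Y=0$ collapses the remaining quadratic identities to $X^2=-1$. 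Equivalently and more conceptually, a three-dimensional structure would make the $(1,0)$-space a graph of a Lie isomorphism $\g_\mathbb{C}\to\g^*_\mathbb{C}$, i.e. of an automorphism $\phi$ of $\mathfrak{sl}(2,\mathbb{C})$, and transversality to its conjugate would require $\phi^{-1}\bar\phi$ to have no fixed line in the adjoint representation; this is impossible, since every automorphism of $\mathfrak{sl}(2,\mathbb{C})$ is inner and hence fixes a Cartan direction. It is exactly this rigidity, absent for the solvable or indefinite algebras where $ad$ can carry real spectrum, that forces $J^*\g$ to be one-dimensional and leaves only the family displayed above.
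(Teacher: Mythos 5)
First, a point of reference: the paper does not prove this proposition at all --- it imports it verbatim from Magnin (\cite{mag1}, Corollary 3; note also that ``$\mathfrak{su}(3)$'' is the paper's typo for $\mathfrak{su}(2)$, i.e.\ algebra (8), which you correctly work with). So there is no in-paper argument to compare against, and your sketch must stand on its own. Its architecture is the right one and matches the scheme used for the other algebras: transitivity of $Aut(\mathfrak{su}(2))=SO(3)$ on directions lets you place the quasi-invariant vector at $e_3$ and align $J^*e_3$ with $e_3^*$, and your exclusion of the three-dimensional case is correct and checkable --- the $2\times 2$ block of $[\,\cdot\,,J^*e_3]$ on $span\{J^*e_1,J^*e_2\}$ is $\left(\begin{smallmatrix} Y & B+\lambda\\ -(X+\lambda) & -A\end{smallmatrix}\right)$, whose vanishing trace gives $A=Y$, whose determinant must be positive, and the $e_1$-, $e_2$-components of $N_{\J}(e_1,e_3)$, $N_{\J}(e_2,e_3)$ then force either determinant $=-1$ (branch $\lambda=0$) or $X^2=-1$ (branch $Y=0$, $\lambda\neq 0$). (Minor quibble: in the conceptual version you need a fixed \emph{vector} of $\phi^{-1}\bar\phi$, i.e.\ eigenvalue $1$, not merely a fixed line; inner automorphisms of $\mathfrak{sl}(2,\mathbb{C})$ do have one, so the conclusion stands.)

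The genuine gap is the step ``$Z=C=0$ at once.'' The $e_3$-components of $N_{\J}(e_1,e_3)$ and $N_{\J}(e_2,e_3)$ read $YZ-(X+\lambda)C=0$ and $(B+\lambda)Z-AC=0$, so the determinant of the system in $(Z,C)$ is $(X+\lambda)(B+\lambda)-AY$, not $1+\lambda^2$. It equals $1+\lambda^2$ only once you know $X=B=0$ and $AY=-1$ --- but those are exactly the facts you derive \emph{afterwards}, and their derivation (your ``$X^2(1+\lambda^2)=0$''; the correct output of the elimination is $2X(1+\lambda^2)=0$, same conclusion) uses the $e_3$-component of $N_{\J}(e_1,e_2)$, namely $1+(X+B)\lambda-Z^2-C^2-XB+AY=0$, which only yields $AY=-1-X^2$ after $Z=C=0$ is known. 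As written the argument is therefore circular at this joint. It is repairable: on the branch $X+B=0$ one can show $(X+\lambda)(B+\lambda)-AY=(\lambda-X)(1+\lambda^2)/(X+\lambda)$ and that $X=\pm\lambda$ each lead to contradictions, so the determinant is nonzero after all; alternatively, follow the paper's pattern for algebras (3) and (4) and extract $\kappa=\tau=0$ and then $Z=C=0$ from the $e_3$-component of $\J^2=-\mathrm{id}$. Either way the order of deductions must be rearranged, and the several ``I expect / should reduce'' steps need to be carried out explicitly before this constitutes a proof.
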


\begin{proposition}{(\cite{mag2}, Proposition 4.1)}
Each integrable complex structure on $\mathfrak{sl}(2,\mathbb{R})\times\mathfrak{sl}(2,\mathbb{R})$ is (in some standard basis) of the form
$$
\left[
\begin{array}{cccccc}
0 & -1 & 0 & 0 & 0 & 0\\
1 & 0 & 0 & 0 & 0 & 0\\
0 & 0 & \lambda & 0 & 0 & \eta \\
0 & 0 & 0 & 0 & -1 & 0\\
0 & 0 & 0 & 1 & 0 & 0\\
0 & 0 & \frac{-1-\lambda^2}{\eta} & 0 & 0 & -\lambda
\end{array}
\right]
$$
for some choice of $\lambda$ and non-zero $\eta$.
\end{proposition}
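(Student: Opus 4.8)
The statement is quoted from \cite{mag2}, but it fits the uniform three-step scheme used throughout this paper, and the plan is to re-derive it that way, now applied to the simple algebra $\g=\mathfrak{sl}(2,\mathbb{R})$. First I would set up the computation exactly as in the previous sections: write $\J e_1 = Xe_1+Ye_2+Ze_3+J^*e_1$, $\J e_2 = Ae_1+Be_2+Ce_3+J^*e_2$, $\J e_3 = Pe_1+Qe_2+Re_3+J^*e_3$, expand the three Nijenhuis brackets $N_{\J}(e_1,e_3)$, $N_{\J}(e_2,e_3)$, $N_{\J}(e_1,e_2)$ from the multiplication table of this type in Proposition \ref{alg}, and collect coefficients. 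This yields a system in $X,\dots,R$ whose right-hand sides are the brackets $[J^*e_i,J^*e_j]$, entirely analogous to the systems obtained for the solvable algebras above.

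The second step is the orbit analysis, and this is where the simple case genuinely departs from the solvable ones and where I expect the main difficulty to lie. Since $\g$ is simple, $Aut(\g)$ is the adjoint group, which preserves the Killing form $K$; the orbits of \emph{lines} $span\{v\}$ are therefore governed by the sign of $K(v,v)$, giving exactly three types — elliptic ($K<0$), hyperbolic ($K>0$), and null ($K=0$) — with representatives among the standard generators and their combinations. I would record these in a proposition (without proof, as for the other algebras) and then test each type for quasi-invariance: a quasi-invariant vector must exist by the remark at the start, and substituting the normal form of a candidate line into the first Nijenhuis relation (the analogue of the equation that blocks $e_1$ in the solvable cases) should collapse, for all but one type, to the by now familiar numerical contradiction $\lambda^2+1=0$ seen in cases (2), (4) and (6). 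The surviving type is the one containing the Killing-hyperbolic generator $e_3$, and after applying an automorphism I may assume the quasi-invariant vector is exactly $e_3$, so that $P=Q=0$, $R=\lambda$, with $J^*e_3\neq 0$ and $\J J^*e_3=(-1-\lambda^2)e_3-\lambda J^*e_3$. Excluding the other two orbit types cleanly is the crux: it is the precise point where semisimplicity and the signature of $K$ enter, and it is the substance of Magnin's computation. For the compact twin $\mathfrak{su}(2)$ the same scheme is easier, since $K$ is definite and there is a single orbit of lines, so no type has to be ruled out — which explains why the two propositions have identical matrices.

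With $e_3$ quasi-invariant the third step is bookkeeping. Feeding $P=Q=0$, $R=\lambda$ back into the system, the $e_3$-components force $Z=C=0$ as in case (6), and the residual equations then show that $J^*\g$ cannot be $3$-dimensional (the relations on $[J^*e_1,J^*e_2]$, $[J^*e_2,J^*e_3]$, $[J^*e_1,J^*e_3]$ become inconsistent, the over-determination being resolved just as in the earlier cases). Hence $J^*\g=span\{J^*e_3\}$ is one-dimensional, which forces $J^*e_1=J^*e_2=0$ by the same $(-1-\lambda^2)\kappa e_3$ argument used for $\g_4^1$. Now $\J$ preserves $span\{e_1,e_2\}$, and the remaining Nijenhuis equations together with the residual automorphism freedom stabilizing $e_3$ pin the restriction of $\J$ there down to the standard rotation $e_1\mapsto e_2\mapsto -e_1$; the identical analysis on the starred factor gives the rotation on $span\{e_1^*,e_2^*\}$, while $\J$ couples $e_3$ and $e_3^*$ through the block $\left[\begin{smallmatrix}\lambda & \eta \\ \frac{-1-\lambda^2}{\eta} & -\lambda\end{smallmatrix}\right]$ that squares to $-\mathrm{id}$. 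Assembling these three $\J$-invariant $2$-planes yields precisely the stated matrix, and the converse — that every $\lambda$ and non-zero $\eta$ give an integrable structure — follows by a direct verification of the three brackets, which is immediate from the block-invariant structure just described.
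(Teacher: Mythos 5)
First, a point of context: the paper does not prove this proposition at all---it is quoted verbatim from \cite{mag2}---so your proposal is really an attempt to redo Magnin's computation with the three-step scheme of the earlier sections. That is a reasonable plan, but two of its load-bearing steps are wrong or missing. The orbit selection is inverted: the quasi-invariant direction in $\mathfrak{sl}(2,\mathbb{R})$ must be \emph{elliptic}, $K(v,v)<0$, not hyperbolic. To see why, note that once the usual reductions are in place ($\J$ preserving the $[v,\cdot]$-invariant complement $W$ of a quasi-invariant $v$, acting there by some $j$ with $j^2=-1$), the conditions $N_{\J}(w,v)=0$ for $w\in W$ read $D+jDj=\lambda(Dj-jD)$ with $D=[v,\cdot]|_W$; splitting $D$ into its $j$-commuting and $j$-anticommuting parts $D_\pm$ this becomes $(I+\lambda j)D_-=0$, and since $(I+\lambda j)(I-\lambda j)=(1+\lambda^2)I$ is invertible, $D$ must commute with $j$, i.e.\ $D\in span\{I,j\}$. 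A hyperbolic $v$ gives $D$ with two distinct real eigenvalues and a null $v$ gives $D$ nilpotent and nonzero; neither lies in $span\{I,j\}$ for any complex structure $j$, so both types are excluded and only the elliptic orbit survives. (The same computation, not ``residual automorphism freedom,'' is what pins $j$ down to $\pm\tfrac{1}{\mu}D$ and hence produces the rotation block; it is consistent with the displayed matrix, where $[e_3,\cdot]$ must \emph{rotate} $span\{e_1,e_2\}$. The paper's own matching of types (7)/(8) with $\mathfrak{su}(2)$/$\mathfrak{sl}(2,\mathbb{R})$ is itself muddled and may have misled you, but the criterion ``the $K>0$ line survives'' is the wrong one.)

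The second gap is the claim that the $3$-dimensional case for $J^*\g$ collapses ``just as in the earlier cases.'' There is no uniform precedent to appeal to: in case (3) the $3$-dimensional case is realized by the second family of structures, and in case (6) with $\theta=1$ it is \emph{not} inconsistent either---it yields three additional sporadic structures. For the simple algebras, excluding a $3$-dimensional $J^*\g$ (equivalently, excluding complex structures that genuinely mix the two factors) is precisely the hard part of Magnin's argument: one must show that the relations of the form $[J^*e_i,J^*e_j]=\alpha J^*e_k+\beta J^*e_l$ forced by the Nijenhuis tensor are incompatible with $J^*$ being an isomorphism onto $\g^*$, and this is a genuine computation, not bookkeeping. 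Until that step is carried out and the elliptic/hyperbolic selection is corrected, your text is a plausible outline rather than a proof.
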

We note that Magnin's work contains a deep study of the moduli space of all integrable complex structures on the relevant square algebras under the action of the whole $Aut(\g\times\g)$ (which does happen in these cases to be not very different from $Aut(\g)\times Aut(\g)$, namely $Aut(\g\times\g)=Aut(\g)\times Aut(\g)\cup \tau(Aut(\g)\times Aut(\g))$ where $\tau$ is the switch between factors of $\g\times\g$). 

\section{Concluding observations}

\begin{remark}
We point out that $\g_{6}^{1}$ is the only algebra for which a complex structure can exhibit different behaviours on both parts of $\g\times\g$.
\end{remark}

\begin{remark} \label{switch}
We see that the only algebra that allows a complex structure that switches its summands ($\J\g=\g^*$ and $\J\g^*=\g$) is the abelian algebra.
\end{remark}

\begin{remark} \label{mix}
As we mentioned before, Theorem 1 and Proposition 2 in \cite{jamarcin} state that each of the algebras admitting an integrable complex structure admits one of a very special type: $\J u=u^*$, $\J v=w$, $\J v^*=w^*$, where $[u,\cdot]$ and $[u^*,\cdot]$ have a complex eigenvalue $A+Bi$ (with $B\neq 0$) or at least double real eigenvalue, for which $v$ and $w$, and $v^*$ and $w^*$ are eigenvectors.  All six vectors constitute a standard basis.
\end{remark}

\begin{remark}
While the present paper and \cite{jamarcin} depend on the low dimension of the algebras in question, and especially on the existence of a classification that allows for a case-by-case study, the previous Remark indicates that one can expect progress in higher dimensions dependent only on possible Jordan forms of the adjoints $[v,\cdot]$, without full classification or description of multiplication tables.
\end{remark}

\end{document}